\def\reals{\mathbb{R}}
\def\ereals{\overline{\reals}}
\def\comp{\mathop{\text{\scriptsize\raise 1pt \hbox{$\circ$}}}}
\def\infconv{\mathop{\text{\scriptsize\raise 1pt \hbox{$\square$}}}}
\def\argmin{\mathop{\rm argmin}\limits}
\def\minimize{\mathop{\rm minimize}\limits}
\def\maximize{\mathop{\rm maximize}\limits}
\def\st{\mathop{\rm subject\ to}}
\def\dom{\mathop{\rm dom}\nolimits}
\def\ovr{\mathop{\rm over}\ }
\def\upto{{\raise 1pt \hbox{$\scriptstyle \,\nearrow\,$}}}
\def\downto{{\raise 1pt \hbox{$\scriptstyle \,\searrow\,$}}}
\def\epi{\mathop{\rm epi}\nolimits}
\def\FF{(\F_t)_{t=0}^T}
\def\F{{\cal F}}
\def\N{{\cal N}}
\newcommand{\utility}{\upsilon}
\newcommand{\nb}[3]{
  {\colorbox{#2}{\bfseries\sffamily\tiny\textcolor{white}{#1}}}
  {\textcolor{#2}{$\blacktriangleright${#3}$\blacktriangleleft$}}}
\newcommand{\mdl}[1]{\nb{Michel}{blue}{#1}}
\title{Optimal Operation and Valuation of Electricity Storages in Intraday Markets \footnote{This
    research benefited from the support of the FMJH Program Gaspard Monge for
    optimization and operations research and their interactions with data
    science.}}
\author{Jean-Philippe Chancelier$^\dagger$ \and
  Michel De Lara\thanks{CERMICS, \'Ecole nationale des ponts et chaussées, IP Paris, France, {jean-philippe.chancelier, michel.delara}@enpc.fr}
  \and
  Fran\c{c}ois Pacaud\thanks{Centre Automatique et Systèmes, Mines Paris-PSL, 60 boulevard Saint Michel, 75006 Paris, francois.pacaud@mineparis.psl.eu}
  \and
  Tanguy Lindegaard\thanks{Gas and Power Origination, Natixis, 7 promenade Germaine Sablon, 75013 Paris, tanguy.lindegaard-ext@natixis.com}
  \and
  Teemu Pennanen\thanks{Department of Mathematics, King's College London, Strand, London, WC2R 2LS, United Kingdom, teemu.pennanen@kcl.ac.uk}
  \and
  Ari-Pekka Perkki\"o\thanks{Mathematics Institute, Ludwig-Maximilian University of Munich, Theresienstr. 39, 80333 Munich, Germany, a.perkkioe@lmu.de}}
\begin{document}
\maketitle

\begin{abstract}
This paper applies computational techniques of convex stochastic optimization to optimal operation and valuation of electricity storages in the face of uncertain electricity prices. Our valuations are based on the indifference pricing principle, which builds on optimal trading strategies and calibrates to the user's financial position, market views and risk preferences. The underlying optimization problem is solved with the Stochastic Dual Dynamic Programming algorithm which is applicable to various specifications of storages, and it allows for e.g.\ hard constraints on storage capacity and charging speed. We illustrate the approach in intraday trading where the agent charges or discharges a battery over a finite number of delivery periods, and the electricity prices are subject to bid-ask spreads and significant uncertainty. Optimal strategies are found in a matter of minutes on a regular PC. We find that the corresponding trading strategies and battery valuations vary consistently with respect to the agent's risk preferences as well as the physical characteristics of the battery.
\end{abstract}

\section{Introduction}

The growing proportion of renewable energy generation increases the
uncertainties and seasonalities of supply and the price of electricity. This creates financial incentives to use batteries and other electricity storages for trading electricity in order to benefit from the price fluctuations. It can be expected that when the overall storage capacity increases and such activities become more popular and the aggregate supply and price fluctuations will diminish. Further discussion and references on the topic can be found e.g.~in \cite{yu2017stochastic} and the survey article~\cite{weitzel2018energy}.

This article proposes mathematical models and computational tools for optimal operation and valuation of batteries and other electricity storages in the context of electricity trading. Economically sensible valuation and optimization techniques will facilitate the expansion of the overall storage capacity which will be necessary for the development of resilient power systems based on renewables. The presented techniques optimize storage management strategies in the face of uncertain electricity prices and they adapt to the trader's financial position, risk preferences as well as different market conditions. Besides the simple model specifications studied in this paper, the techniques can be applied to various specifications of the storage as well as to extensions where the agent might have existing storage, production facilities or consumption or a mixture of the above. 

We start by developing a stochastic optimization model for finding optimal strategies for charging and discharging an energy storage over time subject to physical constraints. The aim is to optimize a user-specified risk measure of the terminal wealth which depends on the uncertain price fluctuations as well as the decision maker's strategy. Realistic models of the storage result in stochastic
optimization problems that fail to be analytically solvable. This is partly due to the fact that a physical storage has capacity constraints which amount to state constraints in the corresponding optimal control formulation. We will employ the {\em stochastic dual dynamic programming} algorithm which is able to handle convex instances of such problems computationally in cases where the underlying risk factors are Markovian; see~\cite{dowson2021sddp} and its references.

Being able to solve the optimal storage management problem numerically allows us
to compute {\em indifference prices} for batteries and other storages. Indifference pricing is a general approach for determining economically meaningful prices of (financial) products whose cashflows cannot be replicated by trading liquid financial instruments; see e.g.\ the collection~\cite{car9} and the references there. In the numerical illustrations, we study an  intraday trading problem where an agent manages a battery by charging or discharging it over 24 delivery periods and the trading takes place in the intraday market. We show, in particular, how the physical characteristics of the storage and the trader's risk preferences affect the valuations.

Optimal management of power systems under uncertainty has been recently surveyed in~\cite{roald2023power}, with the stochastic dual dynamic programming (SDDP) algorithm identified as one of the most promising methods for the management of energy storages~\cite{lohndorf2013optimizing,megel2015stochastic,lw21,zheng2022arbitraging}. As opposed to alternatives such as model predictive control (see e.g.~\cite{arnold2011model}), SDDP takes explicitly into account the future uncertainties when optimizing decisions. Continuous-time approaches based on solving the Hamilton-Jacobi-Bellman equations, on the other hand, struggle with state constraints which are necessary for realistic descriptions of storages which usually involve hard constraints on the amount of stored energy. The recent paper~\cite{pv21} studies optimal operation of a pumped hydroelectric storage with state constraints. The authors assume positive electricity prices and linear utility functions which do not support general risk preferences. They treat state constraints by the ``level set approach" where nondifferentiable penalty functions are added to the objective. Similar techniques were applied by~\cite{gpw23} in the context of mean field games with some applications to renewable energy. Closer to the present paper is~\cite{lw21} which studied gas storage vauation using the SDDP algorithm. As opposed to that of \cite{lw21}, our model allows for lending or borrowing money in a cash account/money market. This avoids ``discounting" future cashflows and it leads to a more natural notion of an indifference price of a storage in terms of money paid at the time of purchase. Moreover, thanks to the explicit modeling of the agent's cash position, we avoid the delicate ``asset-backed trading strategies" of~\cite{lw21} that, at a given point~$t$ in time, involve planned future decisions in addition to the decisions to be implemented at time~$t$.

In the context of electricity markets, indifference pricing has been studied in~\cite{ptw9} and \cite{ccgv} in the continuous-time setting. The first one applied indifference pricing to production facilities that are controlled by switching between a finite number of states. The second one studied the pricing of a general class of structured contracts. A particularly interesting example there is a ``virtual storage" contract, which resembles physical a storage in terms of the generated cashflows. The numerical computations in~\cite{ccgv}, however, are based on finite difference approximations of the associated Hamilton-Jacobi-Bellman (HJB) partial differential equation. As is typical of HJB-based approaches, they do not adapt well to state constraints so they are not ideally appropriate for physical storages.

The rest of this paper is organized as follows. Section~\ref{sec:example} starts with a formulation of the battery management problem as a stochastic optimization problem. The problem is then used to define the indifference price of a battery. The section concludes with a simple model of the electricity prices observed at the end of the intraday market for each delivery period. Section~\ref{sec:convex} reformulation of the storage management problem as an instance of a standard convex control problem. Section~\ref{sec:sddp} gives a short review of the dynamic programming theory and the Stochastic Dual Dynamic Programming algorithm for the general problem formulation from Section~\ref{sec:convex}. Section~\ref{sec:comp} gives the computational results for a battery management problem over 24 delivery periods.

\section{Battery management in the intraday market}\label{sec:example}

Consider the problem of managing a battery by charging or discharging it over a finite number $T$ of delivery periods. Before 30 September 2025, there were 24 one-hour delivery periods in a day while after that date, this was changed to 96 15-minute delivery periods. At the beginning of each delivery period, the trader decides on the amount of energy bought or sold during the delivery period. The energy is used to charge or discharge the battery. The agent aims to maximize the total revenue generated by the electricity trades over $T$~periods.

\S\ref{Imbalance_prices_in_the_intraday_market} gives a brief description of the intraday electricity market and \S\ref{subsec:tom} formulates the optimization problem that will be studied in the remainder of the paper. \S\ref{subsec:idp} recalls the notion  of indifference price and discusses its numerical computation. \S\ref{subsec:price_model} describes the stochastic model that will be used to describe electricity prices in the computational experiments in Section~\ref{sec:comp}.

\subsection{The intraday market}
\label{Imbalance_prices_in_the_intraday_market}

During each delivery period, energy can be bought or sold at the {\em imbalance prices}; see \cite{rte_imbalance_price}. The imbalance prices are published a couple minutes after the end of the delivery period so they remain unknown during the delivery period when the physical electricity trading occurs. 
Electricity can be traded also before the delivery period in the {\em intraday market} which is essentially a futures market for the physical commodities of energy delivered over the delivery periods. Each delivery period is associated with one futures market. Much like exchange-traded securities, the intraday market is based on the {\em double auction} mechanism where traders submit {\em limit orders} in continuous time. All unmatched orders are recoded in the {\em limit order book} that gives the marginal futures prices for buying electricity in a given delivery period.

At any given time $t$, the limit order book for a given delivery period $t'\ge t$ can be represented by a nondecreasing {\em marginal price curve} $s_{t,t'}:\reals\to\reals$ so that the cost of buying $u$ units of energy is given by
\begin{equation}\label{eq:lob}
S_{t,t'}(u):=\int_0^us_{t,t'}(v)dv;
\end{equation}
see e.g.~\cite[Section~2]{pen11}. Since $s_{t,s}$ is nondecreasing, the cost function $S_{t,s}$ is convex, and the best {\em bid} and {\em ask prices} are given by
\begin{equation}\label{eq:bidask}
s_{t,t'}^-=\lim_{h\upto 0}\frac{S_{t,t'}(u)}{u} = \lim_{u\upto 0}s_{t,t'}(u)\quad\text{and}\quad s_{t,t'}^+=\lim_{h\downto 0}\frac{S_{t,t'}(u)}{u} = \lim_{u\downto 0}s_{t,t'}(u);
\end{equation}
see~\cite[Theorem~24.2]{roc70a}. We always have $s^-_{t,t'}<s^+_{t,t'}$, since otherwise, the best bid and ask offers would have been matched and taken out of the limit order book.

\subsection{The optimization problem}
\label{subsec:tom}

When trading at the imbalance prices, one has to make trading decisions before knowing the associated prices. To avoid such a price risk, one can trade in the intraday markets where the futures prices for each delivery period are observable in real time. This gives rise to a continuous-time portfolio optimization problem where one manages a portfolio of futures contracts, one for each delivery period. Such high-dimensional portfolio optimization problems are, in general, difficult to solve. Moreover, a mathematical formulation of such a problem would require a stochastic model for the high-dimensional forward curve that gives the futures prices for each delivery period. Even if one was able to solve the associated optimization problem, one would be faced with the "model risk" that comes with the specification of such high-dimensional stochastic models.  

We take the intermediate approach where energy for the $t$th delivery period is traded in the intraday market but only just before the closing of the corresponding futures market. For example, we could observe the limit order book for the $t$th delivery period a small unit $\epsilon$ of time (e.g.\ 5 seconds) before the intraday market for that period closes. We denote the corresponding cost function by $S_t:=S_{t-\epsilon,t}$; see equation~\eqref{eq:lob}. Unlike the imbalance prices, the cost function $S_t$ is known at time $t$ when making the decision to trade energy in the $t$th delivery period.  

We study a simple storage with a maximum capacity of $\overline X^e$ and maximum charging and discharging speeds of $\overline U$ and $\underline U$, respectively. If $u$ units of energy is input to the storage, the charging level increases by $c(u)$ units. The function $c:\reals\to\reals$ is strictly increasing and satisfies $c(u)\le u$ for every $u\in\reals$. Again, negative $u$ corresponds to discharging the battery. If $x$~units of energy is stored at the beginning of period $[t,t+1[$, the charging level will be $(1-l)x$ units at the end of the period. The constant $l\in[0,1]$ describes proportional charge loss per period.

To formulate the battery management problem mathematically, we describe the uncertainties and information by a probability space $(\Omega,\F,P)$ and a filtration $\FF$, respectively, and assume that each cost function $S_t$ is an $\F_t$-measurable convex {\em normal integrand}\footnote{Recall that a function $f:\reals^n\times\Omega\to\ereals$ is a {\em normal integrand} if the epigraphical mapping $\omega\mapsto\epi f(\cdot,\omega)$ is a closed-valued and measurable; see e.g.~\cite[Chapter~14]{rw98}.}. The problem can then be written as
\begin{equation}\label{sps}
  \begin{aligned}
    &\maximize\ &  \EE[&v(X^m_T)]\quad \ovr\ (X^m,X^e,U)\in\N\\
    &\st & X^m_0&=x_0^m,\\
    & & X^e_0&=0,\\
    & & X^m_t&= X^m_{t-1}-S_t(U_t)\quad t=1,\ldots,T,\\
    & & X^e_t&= (1-l) X^e_{t-1} + c(U_t)\quad t=1,\ldots,T,\\
    & & X^e_t&\in[0,\overline X^e]\quad t=1,\ldots,T,\\
    & & U_t&\in[-\underline U,\overline U]\quad t=1,\ldots,T,
  \end{aligned}
\end{equation}
where $X^m_t$ is the cash position and $X^e_t$ is the amount of stored energy, respectively, at time~$t$. The control variable $U_t$ specifies the amount of energy bought at time~$t$. Here and in what follows, $\N$ denotes the space of {\em nonanticipative} (i.e.\ $\FF$-adapted) processes. The constraint $(X^m,X^e,U)\in\N$ thus means that decisions taken at time $t$ can only depend on the information observed by time $t$. As usual, negative purchases are interpreted as sales. The initial wealth $x^m_0$ is fixed. The {\em utility function} $\utility \colon \reals\to\reals$ is assumed increasing and concave.

\subsection{Indifference pricing}\label{subsec:idp}

Indifference pricing is a general principle for defining economically meaningful values of (financial) products in {\em incomplete markets} where prices cannot be determined by arbitrage arguments alone; see e.g.~\cite{car9,pen14} and their references. In general, indifference prices are {\em subjective} in the sense that they depend on an agent's existing financial position, risk preferences and views concerning the future values of the underlying risk factors (as described by their probabilistic model). For replicable claims, however, the indifference prices coincide with risk-neutral valuations; see~\cite[Theorem~4.1]{pen14}.

We will study an agent who considers renting a battery for a fixed time period $[0,T]$. The {\em indifference price} for renting a battery is the maximum amount the agent could pay for it without worsening their financial position. In the context of problem~\eqref{sps}, the indifference price is given by
\begin{equation}\label{eq:idp}
\pi := \sup\bset{p\in\reals}{\varphi(x^m_0-p,\overline X^e)\ge\varphi(x^m_0,0)},
\end{equation}
where, for any $x,X\in\reals$, $\varphi(x,X)$ denotes the optimum value of~\eqref{sps} when $x_0^m=x$ and $\overline X^e=X$. Thus, $\varphi(x_0^m-p,\overline X^e)$ is the highest achievable expected utility when the agent pays $p$ units of cash in exchange for getting access to a battery with maximum capacity $\overline X^e$. The value $\varphi(x_0^m,0)$, on the other hand, is the highest achievable expected utility for the agent if they decide not to rent the battery. The indifference price is thus the highest sensible price for an agent who measures their wellbeing by the expected utility of terminal wealth as in problem~\eqref{sps}. The definition also assumes that the agent is able to optimize their decision strategies with and without the battery.

Note that, in this simple model where the agent either manages the battery or does nothing at all, we simply have $\varphi(x_0^m,0)=v(x_0^m)$. The indifference principle can equally well be applied in situations where the agent already owns storage and is considering renting more of it. Similarly, one could study situations where the agent has consumption or production facilities or any mixture of the above. In model~\eqref{sps}, we have assumed that all revenue is invested in cash, but one could extend the model also by allowing investments in alternative assets. The model would then involve additional decision variables that describe the investment strategy. The indifference pricing principle applies equally well in such more complicated situations.

If the function $x\mapsto\varphi(x,U)$ is continuous and strictly increasing, the indifference price $\pi$ is the unique solution to the equation
\begin{equation}\label{eq:root}
\varphi(x^m_0-\pi,\overline X^e)=\varphi(x^m_0,0)
\end{equation}
provided a solution exists. Finding a solution of the equation~\eqref{eq:root} is a nontrivial problem in general since a single evaluation of the function $\varphi$ requires the numerical solution of problem~\eqref{sps}. The task simplifies, however, in the case of the exponential utility
function
\begin{equation}\label{eq:exp}
\utility(z) = \frac{1}{\rho}[1-\exp(-\rho z)],
\end{equation}
where $\rho>0$ is a given risk aversion parameter. 

\begin{proposition}\label{prop:idp}
In the case of the exponential utility function~\eqref{eq:exp}, the indifference price~\eqref{eq:idp} is given by
\[
\pi = -\frac{\ln(1-\rho\varphi(0,\overline X^e))}{\rho}.
\]
\end{proposition}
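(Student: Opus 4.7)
The plan rests on the well-known translation property of exponential utility. For $\utility(z)=\frac{1}{\rho}[1-\exp(-\rho z)]$, a direct calculation gives, for every $x,y\in\reals$,
\[
\utility(x+y)=\utility(x)+e^{-\rho x}\utility(y),
\]
so that for any random variable $Y$ we have $\EE[\utility(x+Y)]=\utility(x)+e^{-\rho x}\EE[\utility(Y)]$.

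Next, I would observe that in problem~\eqref{sps} the initial wealth $x_0^m$ enters only additively in the terminal cash position: iterating the cash dynamics gives $X^m_T=x_0^m-\sum_{t=1}^T S_t(U_t)$, and the remaining constraints (on $X^e$ and $U$) do not involve $x_0^m$. Hence the set of attainable random variables $Y:=-\sum_{t=1}^T S_t(U_t)$ depends on $\overline X^e$ but not on $x_0^m$. Combining this with the translation identity above yields
\[
\varphi(x,\overline X^e)=\utility(x)+e^{-\rho x}\varphi(0,\overline X^e)\qquad\forall x\in\reals.
\]
Applying this at $\overline X^e=0$, where the constraints $X^e_t\in[0,0]$ together with the strict monotonicity of $c$ force $U\equiv 0$ and hence $Y\equiv 0$, gives $\varphi(0,0)=\utility(0)=0$ and therefore $\varphi(x,0)=\utility(x)$.

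Now I would plug these two identities into the indifference condition~\eqref{eq:root}. The equation $\varphi(x_0^m-\pi,\overline X^e)=\varphi(x_0^m,0)$ becomes
\[
\utility(x_0^m-\pi)+e^{-\rho(x_0^m-\pi)}\varphi(0,\overline X^e)=\utility(x_0^m),
\]
which, after substituting the explicit form of $\utility$ and cancelling the common factor $\frac{1}{\rho}e^{-\rho x_0^m}$, simplifies to $e^{\rho\pi}[1-\rho\varphi(0,\overline X^e)]=1$. Solving for $\pi$ yields the stated formula. The map $x\mapsto\varphi(x,\overline X^e)$ is continuous and strictly increasing (it equals $\utility(x)+e^{-\rho x}\varphi(0,\overline X^e)$ with $\varphi(0,\overline X^e)\ge\varphi(0,0)=0$), so the solution of~\eqref{eq:root} actually coincides with the supremum in~\eqref{eq:idp}, justifying the identification of $\pi$ with the computed root.

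The only mild subtlety, and what I would flag as the main point to verify, is the claim that $\varphi(0,\overline X^e)<1/\rho$ so that the logarithm is well defined: this holds because $\utility\le 1/\rho$ everywhere and, under any trading strategy of the battery, the terminal wealth $Y$ is a.s.\ finite, so $\EE[\utility(Y)]<1/\rho$ with equality ruled out by $P(Y<\infty)=1$. Everything else reduces to algebraic manipulation once the translation property and the independence of the feasible set on $x_0^m$ are established.
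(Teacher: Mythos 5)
Your proof is correct and follows essentially the same route as the paper's: the translation property of exponential utility combined with the observation that $x_0^m$ enters the feasible set only as an additive shift of $X^m_T$ yields the identity $\varphi(x,\overline X^e)=\utility(x)+e^{-\rho x}\varphi(0,\overline X^e)$, after which the formula is pure algebra. Your additional remarks — justifying $\varphi(0,0)=0$ via the forced $U\equiv 0$, checking strict monotonicity of $x\mapsto\varphi(x,\overline X^e)$, and flagging that $\varphi(0,\overline X^e)<1/\rho$ is needed for the logarithm — are sound refinements of points the paper leaves implicit, not a different argument.
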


\begin{proof}
The change of variables $X_t^m\to X_t^m-x$ in problem~\eqref{sps} shows that the optimum value $\varphi(x^m_0,\overline X^e)$ is not affected if we set the initial wealth $x_0^m$ to zero and add $x_0^m$ units of cash to the terminal wealth $X_T^m$ instead. Thus, in the case of exponential utility, the optimum value function satisfies
\[
\varphi(x,U)=e^{-\rho x}\varphi(0,U) +\frac{1}{\rho}(1-e^{-\rho x}).
\]
Equation~\eqref{eq:root} can thus be written as
\[
e^{-\rho(x_0^m-\pi)}\varphi(0,\overline X^e) + \frac{1}{\rho}(1-e^{-\rho(x_0^m-\pi)}) = e^{-\rho x_0^m}\varphi(0,0) +\frac{1}{\rho}(1-e^{-\rho x_0^m})
\]
or equivalently
\[
e^{\rho \pi}\varphi(0,\overline X^e) - \frac{1}{\rho}e^{\rho\pi} = \varphi(0,0) - \frac{1}{\rho}.
\]
This has the unique solution
\[
\pi = \frac{\ln(1-\rho\varphi(0,0))-\ln(1-\rho\varphi(0,\overline X^e)) }{\rho},
\]
where $\varphi(0,0)=0$.
\end{proof}

Proposition~\ref{prop:idp} implies that the computation of the indifference price of the battery in~\eqref{eq:idp} requires the solution of only one instance of problem~\eqref{sps}. In problems where the agent has e.g.\ production facilities, the computation of the indifference price would require the solution of two optimization problems since, in that case, the computation of the optimum value $\varphi(0,0)$ obtained without renting the battery becomes nontrivial, unlike in the proof above.

\subsection{The electricity price model}\label{subsec:price_model}

In the numerical illustrations in Section~\ref{sec:comp} below, we assume both $S_t$ and $c$ to be positively homogeneous in the decision variables. More specifically, we will assume that
\begin{equation}\label{eq:ssl}
S_t(u,\omega)=
\begin{cases}
    s^+_t(\omega)u & \text{if $u\ge 0$},\\
    s^-_t(\omega)u & \text{if $u\le 0$},
\end{cases}
\end{equation}
where $s^+_t$ and $s^-_t$ denote the best ask- and the best bid-prices, respectively, just before the $t$th delivery period; see~\eqref{eq:bidask}. The bid and ask prices are modeled as stochastic processes. The above assumption can be justified for a small trader whose trades are unlikely to exceed the liquidity available at the best bid and ask prices. Larger orders may dig deeper into the limit order book so that the marginal prices would move against the trader according to equation~\eqref{eq:lob}. Simple stochastic models for the full limit order book can be found e.g.\ in~\cite{mp12}.

For simplicity, we will assume that $s^-_t=s_t-\delta$ and $s^+_t=s_t+\delta$, where $\delta>0$ is constant and only the {\em mid-price} $s_t$ is stochastic. We will assume that
\[
s_t = \bar s_t+\xi_t,
\]
where $\xi_t$ is a zero-mean stochastic process and $\bar s_t$ is the {\em day-ahead} price for the $t$th delivery period observed after the day-ahead auction. The logic is that, being a futures price for energy, the day-ahead price should be close to the expectation of the future intraday prices. Consistent positive or negative deviations from day-ahead prices would allow for simple ``statistical arbitrage" trades, so one would expect the prices to converge according to supply and demand. Figure~\ref{fig:time_series} plots the historical time series of day-ahead and ID1 prices for France in 2024 at hourly granularity. The ID1 index is a weighted average of executed trades during the last hour before the closure of the intraday market. We take it as a proxy for the intraday mid-price in our statistical model below.

We will describe $\xi=(\xi_t)$ by the autoregressive time series model
\begin{equation}\label{eq:ar_1}
    \xi_t = a\xi_{t-1} +\epsilon_t
\end{equation}
where $\epsilon_t$ are iid Gaussians with zero mean.
\begin{figure}[!ht]
  \centering  \includegraphics[width=.7\textwidth]{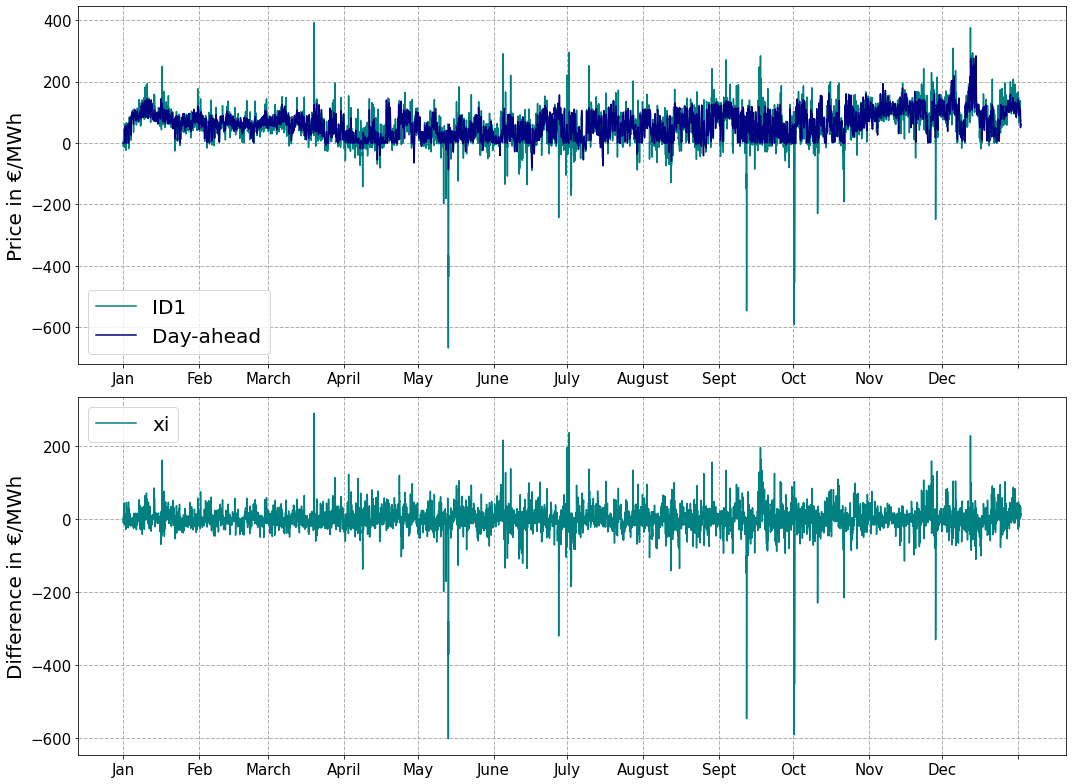}
  \caption{Time series of day-ahead, ID1 prices and $\xi$ for France in 2024 \label{fig:time_series}}
\end{figure}
We estimate the coefficient $a\in\reals$ with a linear regression from $8760$ observations of $\xi_t$ (365 days of hourly data). Table \ref{tab:linear_reg} sums up the linear regression computations. Given the coefficient estimate $\hat a$, we 
assume that the random term $\epsilon_t$ in~\eqref{eq:ar_1} follows the zero-mean Gaussian distribution fitted to the residuals
\begin{equation}
    r_t = \xi_{t+1}-\hat a\xi_t.
\end{equation}

\begin{table}[h]
    \centering
    \begin{tabular}{c|c|c}
         & Value & p-value \\
         \hline
        Slope $\hat a$ & 0.48 & 0.00 \\
        \hline
        Intercept & -0.13 & 0.65 \\
        \hline
        $R^2$ & 0.23 & .\\ 
    \end{tabular}
    \caption{Linear regression parameters}
    \label{tab:linear_reg}
\end{table}



\section{Convex control reformulations}\label{sec:convex}

We will formulate the storage management problem as a {\em convex stochastic optimal control} problem. For generality, we will allow for the general control format of~\cite[Section~6.2]{ccdmt} where, at each stage $t$, the system may be controlled by both $\F_{t-1}$-measurable and $\F_t$-measurable controls; see
also~\cite{pp25a}. Such formulations arise naturally, e.g.\ when trading in both intraday and imbalance markets. Other instances can be found
in~\cite{pdccjr}.

Consider the optimal control problem
\begin{subequations}
  \label{oc}
  \begin{align}
    &\minimize & \EE\Bigg[\sum_{t=0}^{T-1}&  L_t(X_t,U^b_t,U^a_{t+1})+ J(X_T)\Bigg]\ \ovr\ (X,U^b,U^a)\in\N,\\ 
    &\st\ & X_0 &= x_0,\\
    & & X_{t+1} &= F_{t}(X_t,U^b_t,U^a_{t+1})\quad t=0,\dots,T-1\ a.s.,
                  \label{oc_c}
\end{align}
where the state $X_t$ and the controls\footnote{The superscript~$b$ in~$U^b_t$ stands for ``before'' (in the sense of a decision taken at the beginning of the time interval~$[t,t+1[$), whereas the superscript~$a$ in~$U^a_t$ stands for ``after'' (in the sense of a decision taken at the end of the time interval~$[t,t+1[$).} $U^b_t$ and $U^a_t$ are random variables taking values in $\reals^N$, $\reals^{M^b}$ and $\reals^{M^a}$. The functions $L_t:\reals^N\times\reals^{M^b}\times\reals^{M^a}\times\Omega\to\ereals$ and $J:\reals^N\times\Omega\to\ereals$ are extended real-valued convex normal integrands and, for each $t$,
\begin{equation*}
  \label{eq:system_equations}
  F_{t}(x_t,U^b_t,U^a_{t+1},\omega) := A_{t+1}(\omega)x_t+B^b_{t+1}(\omega)U^b_t+B^a_{t+1}(\omega)U^a_{t+1}+W_{t+1}(\omega),
\end{equation*}
\end{subequations}
where $A_{t+1}$, $B^b_{t+1}$ and $B^a_{t+1}$ are $\F_{t+1}$-measurable random matrices of appropriate dimensions and $W_{t+1}$ are $\F_{t+1}$-measurable random vectors. As before, the set $\N$ denotes the space of adapted stochastic processes. The vector $x_0\in\reals^N$ is a given initial state. 

The format of problem~\eqref{oc} is quite general and can accommodate various kinds of storages and market specifications. Besides storage management and electricity trading, it can include production facilities and/or consumption of
electricity. The computational techniques discussed below apply to such
situations as well. Both production and consumption may create stronger
incentives to buy storages. The indifference pricing approach described in
Section~\ref{subsec:idp} applies directly to such modifications.

Problem~\eqref{sps} does not fit the format of~\eqref{oc}, in general, since the functions $S_t$ and $c$ are nonlinear. However, when the cost functions are of the form~\eqref{eq:ssl}, we can rewrite the budget constraint in~\eqref{sps} as the linear equation
\[
X^m_t=X^m_{t-1} - s^+_tU^+_t + s^-_tU^-_t,
\]
where $U^+_t$ and $U^-_t$ are the positive and negative parts, respectively, of the control variable~$U_t$. As to the battery, we will assume that the function $c$ is of the form 
\begin{equation}
  \label{eq:function_c}
c(u)=
\begin{cases}
    c^+u & \text{if $u\ge 0$},\\
    c^-u & \text{if $u\le 0$}
\end{cases}  
\end{equation}
where $c^+$ and $c^-$ are constants such that $0<c^+\le c^-$. We can then write the battery constraint in the linear form
\[
X^e_t=(1-l)X^e_{t-1}+c^+U^+_t-c^-U^-_t.
\]
When the cost function is given by Equation~\eqref{eq:ssl} and the function $c$ is given by Equation~\eqref{eq:function_c},  Problem~\eqref{sps} can be written as
\begin{subequations}
  \label{spsl0}
  \begin{align}
    \maximize\quad &  \EE[v(X^m_T)]\quad \ovr\ (X^m,X^e,U)\in\N \nonumber \\
    \st\quad & X^m_0=x_0^m,\nonumber\\
            & X^e_0=0,\nonumber \\
            & X^m_t= X^m_{t-1}- s^+_tU^+_t + s^-_tU^-_t\quad t=1,\ldots,T, \label{eq:spsl0-Xm}\\
            & X^e_t= (1-l)X^e_{t-1} +c^+U^+_t-c^-U^-_t\quad t=1,\ldots,T,\label{eq:spsl0-Xe}\\
            & X^e_t\in[0,\overline X^e]\quad t=0,\ldots,T, \label{eq:spsl0-Xe-bounds}\\
            & U^+_t\in[0,\overline U] \quad t=0,\ldots,T, \label{eq:spsl0-U-plus}\\
               & U^-_t\in[0,\underline U] \quad t=0,\ldots,T
                 \eqfinp \label{eq:spsl0-U-minus}
  \end{align}
\end{subequations}
Consider also the relaxation
\begin{subequations}
  \label{spsl}
  \begin{align}
    \maximize\quad &  \EE[v(X^m_T)]\quad \ovr\ (X^m,X^e,\hat U,\check U)\in\N \nonumber\\
    \st\quad & X^m_0=x_0^m,\nonumber\\
     & X^e_0=0,\nonumber\\
     & X^m_t= X^m_{t-1}- s^+_t\hat U_t + s^-_t\check U_t\quad t=1,\ldots,T,\label{eq:spsl-Xm}\\
     & X^e_t= (1-l)X^e_{t-1} +c^+\hat U_t-c^-\check U_t\quad t=1,\ldots,T,\\
     & X^e_t\in[0,\overline X^e]\quad t=0,\ldots,T,\label{eq:spsl-Xe}\\
     & \hat U_t\in[0,\overline U] \quad t=0,\ldots,T, \label{eq:spsl-hat-U} \\
     & \check U_t\in[0,\underline U] \quad t=0,\ldots,T,
  \end{align}
\end{subequations}
where we allow $\hat U$ and $\check U$ to take strictly positive values at the
same time.  Problem~\ref{spsl} is a {\em convex} stochastic control problem. If
we add the complementarity constraint that $\hat U_t\check U_t=0$ for every~$t$,
then Problem~\eqref{spsl} is equivalent to Problem~\eqref{spsl0}. Thus, Problem~\eqref{spsl} is a relaxation of Problem~\eqref{spsl0} and thus, its optimum value of~\eqref{spsl} is at least as high as that of~\eqref{spsl0}. Adding the complementarity constraint would, however, destroy
convexity and make the problem difficult to solve, in general.  The following
Proposition~\ref{pr:complementarity_condition} gives a simple condition under
which the complementarity condition is redundant in the sense that, the optimum values in problems \eqref{spsl} and \eqref{spsl0} coincide and the optimal solutions of problem \eqref{spsl0} are easily constructed from the solutions of \eqref{spsl}.

\begin{proposition}\label{pr:complementarity_condition}
Assume that
\begin{equation}
  \frac{s_t^-}{c^-}\le \frac{s_t^+}{c^+}.
  \label{eq:complementarity_condition_assumption}
\end{equation}
Then the optimum values of~\eqref{spsl0} and~\eqref{spsl} coincide. Moreover, if $(X^m,X^e,\hat U,\check U)$ is feasible in~\eqref{spsl}, and we define $(\bar X^m,\bar X^e,\bar U)$ by setting 
\begin{equation}\label{eq:Ct-one}
C_t := c^+\hat U_t-c^-\check U_t
\end{equation} 
and
\begin{subequations}
\begin{align}
\bar U_t&:=C_t^+/c^+-C_t^-/c^-,
\label{eq:complementarity_condition_definition_a} 
\\ 
\bar X^m_t&:= \bar X^m_{t-1}-s^+_t\bar U_t^++s_t^-U_t^-, \quad \bar X^m_0=x^m_0,
\label{eq:complementarity_condition_definition_b} \\
\bar X^e_t&:=X^e_t, \quad \bar X^e_0=0,
\end{align}
\end{subequations}
then $(\bar X,\bar U)$ is feasible in~\eqref{spsl0},
\begin{equation}\label{objectives}
\EE[v(\bar X^m_T)] \ge \EE[v(X^m_T)].
\end{equation}
\end{proposition}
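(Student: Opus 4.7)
The plan is to verify the four feasibility requirements for $(\bar X^m,\bar X^e,\bar U)$ in \eqref{spsl0} and then establish the pathwise inequality $\bar X^m_T\ge X^m_T$, from which both the objective inequality \eqref{objectives} and the equality of optimum values follow (the latter because \eqref{spsl} is already a relaxation of \eqref{spsl0}, so exhibiting a feasible point of \eqref{spsl0} with at least as good value for every feasible point of \eqref{spsl} forces equality). Nonanticipativity of $(\bar X^m,\bar X^e,\bar U)$ is immediate since $C_t$ is built from $\F_t$-measurable quantities.

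The first step is to note that \eqref{eq:complementarity_condition_definition_a} is precisely the decomposition of $\bar U_t$ into its positive and negative parts, so $\bar U_t^+=C_t^+/c^+$ and $\bar U_t^-=C_t^-/c^-$. Substituting gives $c^+\bar U_t^+-c^-\bar U_t^-=C_t^+-C_t^-=C_t$, which by \eqref{eq:Ct-one} matches the right-hand side of the dynamics that generated $X^e$. Hence the choice $\bar X^e_t:=X^e_t$ satisfies \eqref{eq:spsl0-Xe} and inherits the bounds \eqref{eq:spsl0-Xe-bounds} automatically. For the control bounds \eqref{eq:spsl0-U-plus}--\eqref{eq:spsl0-U-minus}, observe that when $C_t\ge 0$ one has $c^+\bar U_t^+=C_t\le c^+\hat U_t$ (since $\check U_t\ge 0$), so $\bar U_t^+\le\hat U_t\le\overline U$; symmetrically, when $C_t<0$ one has $c^-\bar U_t^-=-C_t\le c^-\check U_t$, giving $\bar U_t^-\le\check U_t\le\underline U$.

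The heart of the argument is the cash comparison. Set $D_t:=\bar X^m_t-X^m_t$, so $D_0=0$. Using \eqref{eq:complementarity_condition_definition_b} and \eqref{eq:spsl-Xm},
\begin{equation*}
D_t-D_{t-1}=s_t^+(\hat U_t-\bar U_t^+)-s_t^-(\check U_t-\bar U_t^-).
\end{equation*}
Split on the sign of $C_t$. If $C_t\ge 0$, then $\bar U_t^+=\hat U_t-(c^-/c^+)\check U_t$ and $\bar U_t^-=0$, hence
\begin{equation*}
D_t-D_{t-1}=s_t^+(c^-/c^+)\check U_t-s_t^-\check U_t=c^-\check U_t\bigl(s_t^+/c^+-s_t^-/c^-\bigr)\ge 0
\end{equation*}
by assumption \eqref{eq:complementarity_condition_assumption} and $\check U_t\ge 0$. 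If $C_t<0$, then $\bar U_t^-=\check U_t-(c^+/c^-)\hat U_t$ and $\bar U_t^+=0$, yielding
\begin{equation*}
D_t-D_{t-1}=s_t^+\hat U_t-s_t^-(c^+/c^-)\hat U_t=c^+\hat U_t\bigl(s_t^+/c^+-s_t^-/c^-\bigr)\ge 0.
\end{equation*}
Summing over $t$ gives $\bar X^m_T\ge X^m_T$ a.s., and monotonicity of $v$ yields \eqref{objectives}.

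The only subtlety I anticipate is confirming that the case-analysis on $\{C_t\ge 0\}$ and $\{C_t<0\}$ respects measurability and can be carried out $\omega$-wise; this is routine because both sets lie in $\F_t$ and the identities above are deterministic algebraic statements applied pointwise. The substantive content, and the only place where hypothesis \eqref{eq:complementarity_condition_assumption} enters, is the single inequality $s_t^+/c^+-s_t^-/c^-\ge 0$, which has the natural economic interpretation that the marginal cost of increasing stored energy by one unit (using purchases) does not exceed the marginal revenue obtained from decreasing it by one unit (via sales), so simultaneous buying and selling is always dominated by netting.
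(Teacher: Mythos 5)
Your proposal is correct and follows essentially the same route as the paper's proof: both verify feasibility of $(\bar X^m,\bar X^e,\bar U)$ by a case analysis on the sign of $C_t$, observe that the energy dynamics are preserved exactly while the cash dynamics can only improve under assumption~\eqref{eq:complementarity_condition_assumption}, and then propagate the pathwise inequality $\bar X^m_T\ge X^m_T$ through the monotone utility. Your explicit formula for the increment $D_t-D_{t-1}$ in terms of $s_t^+/c^+-s_t^-/c^-$ is a slightly more transparent packaging of the paper's inequality~\eqref{eq:cost-ineq} combined with its induction, but the substance is identical.
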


\begin{proof} As already pointed out the value of Problem~\eqref{spsl0} is smaller that the value of Problem~\eqref{spsl}.
  It remains to prove the converse.
  
  For that purpose, consider $(X^m, X^e,\hat U,\check U)$ feasible in Problem~\eqref{spsl}, and define $\bar U_t$,
  for all $t=1,\ldots,T$, as follows
  \begin{equation}\label{eq:Ct}
    \bar U_t :=C_t^+/c^+-C_t^-/c^- \text{ with } C_t := c^+\hat U_t-c^-\check U_t
    \eqfinp
  \end{equation}
  First, we prove that $(\bar X^m, \bar X^e, \bar U)$, where $\bar X^m$ and $\bar X^e$
  are given by Equations~\eqref{eq:spsl0-Xm} and~\eqref{eq:spsl0-Xe} is feasible in
  in Problem~\eqref{spsl0}.
  For each $t=1,\ldots,T$, we have to consider two cases according to the sign of $C_t$ but we just consider the case
  $C_t \ge 0$ as the case $C_t \le 0$ is treated in a similar way.
  If $C_t \ge 0$, then by~\eqref{eq:Ct} we have that $\bar U_t\ge 0$ and we obtain that
  \begin{align}
    \bar U^-_t =0 \text{ and }  0 \le \bar U^+_t = \bar U_t
    &= \hat U_t-\frac{c^-}{c^+}\check U_t\le \hat U_t\le \overline U \eqfinv
      \label{eq:U-constraint}
  \end{align}
  using~\eqref{eq:spsl-hat-U} in the last inequality. We obtain from~\eqref{eq:U-constraint}
  that $\bar U$ satisfy the constraints~\eqref{eq:spsl0-U-plus} and~\eqref{eq:spsl0-U-minus}.
  Moreover, if $C_t \ge 0$, we also have 
  \begin{align}
    s^+ \bar U^+_t- s^- \bar U^-_t
    &= s^+ \bar U_t \tag{as $\bar U^+_t = \bar U_t$ and $\bar U^-_t = 0$ when $\bar U_t\ge 0$}
    \\
    &= s^+ \bp{\hat U_t - \frac{c^-}{c^+} \check U_t}
      \tag{by~\eqref{eq:Ct}}
    \\
    &\le s^+ \hat U_t - s^- \check U_t
      \label{eq:cost-ineq}
      \intertext{using assumption~\eqref{eq:complementarity_condition_assumption} in the last inequality, and also}
    c^+ \bar U^+_t- c^- \bar U^-_t
    &= c^+ (C_t/c^+) \tag{by~\eqref{eq:Ct} as $C_t^+ = C_t$ and $C_t^-=0$ when $C_t \ge 0$} \\
    &= c^+_t\hat U_t - c^-_t\check U_t
      \label{eq:controls-equal}
      \eqfinv 
  \end{align}
  using again the definition of $C_t$ in~\eqref{eq:Ct} in the last equality.
  Proceeding in a similar way when $C_t \le 0$ also gives~\eqref{eq:cost-ineq} and \eqref{eq:controls-equal}.

  Now, using Equation~\eqref{eq:controls-equal} and initial conditions, it is
  immediate to obtain that $X^e$ and $\bar X^e$ coincide and therefore
  $\bar X^e$ satisfy constraint~\eqref{eq:spsl0-Xe-bounds} as $X^e$ satisfy
  constraint~\eqref{eq:spsl-Xe}.  Thus, we have proved that
  $(\bar X^m, \bar X^e, \bar U)$, is feasible in Problem~\eqref{spsl0}.

  Second we prove that
  \begin{equation}
    \label{eq:cost-ineq1}
    \EE[v(X^m_T)] \le \EE[v(\bar X^m_T)]
    \eqfinp
  \end{equation}
  As the utility function~$v$ is increasing, it is enough to prove that $X^m_T \le \bar X^m_T$.
  We prove this result by induction on $t$. First we have that $ X^m_0 = x^m_0 = \bar X^m_0$.
  Second, assume that $X^m_{t-1} \le \bar X^m_{t-1}$ for $0 \le t-1 < T$. Then, we have
  \begin{align}
    X^m_{t} &= X^m_{t-1}- \bp{s^+_t\hat U_t - s^-_t\check U_t} \tag{by~\eqref{eq:spsl-Xm}} \\
            &\le \bar X^m_{t-1} - \bp{s^+_t\hat U_t - s^-_t\check U_t} \tag{by induction assumption} \\
            &\le \bar X^m_{t-1} - \bp{ s^+ \bar U^+_t- s^- \bar U^-_t}\tag{by~\eqref{eq:cost-ineq}} \\
            &= \bar X^m_t \tag{by~\eqref{eq:spsl0-Xm}}
              \eqfinv
  \end{align}
  and the result follows.

  Finally, given an admissible solution $(X^m, X^e,\hat U,\check U)$ feasible in Problem~\eqref{spsl},
  we have obtained a feasible solution in Problem~\eqref{spsl0} satisfying Equation~\eqref{eq:cost-ineq1}.
  Thus, the cost associated to $(X^m, X^e,\hat U,\check U)$ in Problem~\eqref{spsl} is
  smaller than the value of  $(\bar X^m, \bar X^e, \bar U)$ in Problem~\eqref{spsl0} and thus smaller that
  ${\cal V}(\eqref{spsl0})$ and we conclude that ${\cal V}(\eqref{spsl}) \le {\cal V}(\eqref{spsl0})$,
  which ends the proof.
\end{proof}

Problem~\eqref{spsl} is in the format of~\eqref{oc} without the here-and-now controls $U^b$. Indeed, problem~\eqref{sps} is then an instance of~\eqref{oc} with $N=2$, $M^b=0$, $M^a=2$,
\[
J(x_T)=-\utility(x_T^m),
\]
$U^a_t=(\check U_t,\hat U_t)$ and
\[
L_t(X_t,U^b_t,U^a_{t+1}) = 
\begin{cases}
    0 & \text{if $X^e_t\in[0,\overline X^e]$, $\hat U_{t+1}\in[0,\overline U]$ and $\check U_{t+1}\in[0,\underline U]$},\\
    +\infty & \text{otherwise}.
\end{cases}
\]
We can thus use the numerical techniques described in Section~\ref{subsec:dmp}
and Section~\ref{subsec:sddp} below. In particular, since there are no here-and-now controls, we can use the SDDP Julia package described
in~\cite{dowson2021sddp}.



\section{Stochastic dual dynamic programming}
\label{sec:sddp}

In general, problem~\eqref{oc} does not allow for analytical solutions, so we will resort to computational techniques based on dynamic programming. \S\ref{Stochastic_dynamic_programming} reviews the general dynamic programming theory for optimal control problems of the form~\eqref{oc}. In particular, Theorem~\ref{opoc} characterizes optimal solutions in terms of Bellman functions (aka cost-to-go functions) while Theorem~\ref{thm:markov} shows that when the underlying randomness is driven by a Markov process, the Bellman functions only depend on the current system state~$X_t$ and the current value of the Markov process.
\S\ref{subsec:dmp} presents the discretization of Markov processes.
\S\ref{subsec:sddp} describes a version of the {\em stochastic dual dynamic
programming} algorithm for problems of the form~\eqref{oc}.

\subsection{Stochastic dynamic programming}
\label{Stochastic_dynamic_programming}

We will assume throughout that there is a stochastic process
$\xi=(\xi_t)_{t=0}^T$ such that $L_{t-1}$, $ A_t$, $ B^b_t, B^a_t$ and $ W_t$
in~\eqref{eq:system_equations} depend on $\omega$ only through
$\xi^t:=(\xi_s)_{s=0}^t$ and $J_T$ depends on $\omega$ only through $\xi^T$. We will also
assume that $\xi_t$ takes values in $\reals^{d_t}$ and we denote
\[
d^t:=d_0+\cdots+d_t
\]
and $d=d^T$. Without loss of generality, we will assume that $\Omega=\reals^d$ and
that $\xi_t(\omega)$ is the projection of $\omega$ to its $t$th component and that $\F_t$ is the $\sigma$-algebra on $\reals^d$ generated by the projection
$\xi\mapsto\xi^t:=(\xi_0,\ldots,\xi_t)$. In particular, $P$ is the distribution of~$\xi$. With a slight misuse of notation we will write 
$L_t(x_t,U^b_t,U^a_{t+1},\xi^{t+1})$ instead of $L_t(x_t,U^b_t,U^a_{t+1},\omega)$ and similarly for other $\F_{t+1}$-measurable random elements. 

It follows that the $\F_t$-conditional expectation of an $\F_{t+1}$-measurable quasi-integrable random variable $\xi\mapsto\eta(\xi^{t+1})$ has a pointwise representation
\[
\EE\big[\eta(\xi^{t+1})| \xi^t\big] := \int_{\reals^{d_{t+1}}}\eta(\xi^t,\xi_{t+1})d\nu_t(\xi^t,d\xi_{t+1})\quad\forall \xi^t\in\reals^{d^t},
\]
where $\nu_t$ is a {\em regular $\xi^t$-conditional distribution of $\xi_{t+1}$}; see, e.g.,~\cite[Section~2.1.5]{pp24}. Note that $\xi^{t+1}=(\xi^t,\xi_{t+1})$. We say that a function $h:\reals^n\times\reals^{d^{t+1}}\to\ereals$ is {\em lower bounded} if there is an integrable function $m:\reals^{d^{t+1}}\to\reals$ such
that $h(x,\xi^{t+1})\ge m(\xi^{t+1})$ for all
$(x,\xi^{t+1})\in\reals^n\times\reals^{d^{t+1}}$. Given a lower bounded measurable
function $h:\reals^n\times\reals^{d^{t+1}}\to\ereals$, we denote
\[
\EE[h(x,\xi^{t+1})|\xi^t] := \int_{\Xi_{t+1}}h(x,\xi^t,\xi_{t+1})d\nu_t(\xi^t,d\xi_{t+1})\quad\forall (x,\xi^t)\in\reals^n\times\reals^{d^t}.
\]
  
We say that a sequence of lower bounded measurable functions
$J_t:\reals^N\times\reals^{d^t}\to\ereals$ solves the Bellman equations for~\eqref{oc}
if $J_T=J$ and 
\begin{equation}\label{be}
  J_t(x_t,\xi^t) = \inf_{U^b_t}\left\{\EE\left[\inf_{U^a_{t+1}}\{L_t(x_t,U^b_t,U^a_{t+1},\xi^{t+1}) + J_{t+1}(F_{t}(x_t,U^b_t,U^a_{t+1},\xi^{t+1}),\xi^{t+1})\}\Bigg|\xi^t\right]\right\}
\end{equation}  
for all $t=0,\ldots,T-1$ and $P$-almost every $\xi$.
Bellman equations of the above form~\eqref{be}, together with optimal feedbacks, can be derived\footnote{%
  In \cite[Sect.~6]{ccdmt}, Proposition~13 can be extended to the case where instantaneous costs, dynamics
  and conditional distribution of the next uncertainty all three depend upon the whole sequence of past uncertainties (and not only upon the last uncertainty).
  The proof requires an adaptation of Appendix~D in the preprint version of~\cite{ccdmt} (which contains additional
material compared to the published version~\cite{ccdmt}). 
We can then obtain a variant of \cite[Equation~(42)]{ccdmt} that corresponds to Equation~\eqref{be} above.
Then, optimal feedbacks can be deduced from \cite[Subsect.~3.3]{ccdmt}.
}
from the framework of~\cite[Sect.~6]{ccdmt} (and \cite[Subsect.~3.3]{ccdmt} for optimal feedbacks), 
which first analyzed 
Bellman equations in the case of nonlinear system dynamics
with decision-hazard-decision information structures.
The equations are well-motivated by the following optimality conditions from \cite[Theorem~1]{pp25a}.

\begin{theorem}[Optimality principle]\thlabel{opoc}
If $(J_t)_{t=0}^T$ is a solution of~\eqref{be} such that each
$J_t:\reals^N\times\Xi^t\to\ereals$ is a lower bounded convex normal integrand, then the optimum value of~\eqref{oc} equals $J_0(x_0)$ and an $(\bar X,\bar U^b,\bar U^a)\in\N$ solves~\eqref{oc} if and only if $\bar X_0=x_0$ and
\begin{align*}
\bar U^b_t(\xi^t)&\in \argmin_{U^b_t\in\reals^{M^b}}
      \left\{\EE\left[\inf_{U^a_{t+1}\in\reals^{M^a}}\{L_t(\bar X_t,U^b_t,U^a_{t+1},\xi^{t+1}) + J_{t+1}(F_{t}(\bar X_t,U^b_t,U^a_{t+1},\xi^{t+1}),\xi^{t+1})\}\Bigg|\xi^t\right]
      \right\}\eqfinv
    \\
    \bar U^a_{t+1}(\xi^{t+1})
    &\in \argmin_{U^a_{t+1}\in\reals^{M^a}}\{L_t(\bar X_t(\xi^t),\bar U^b_t(\xi^t),U^a_{t+1},\xi^{t+1})
     + J_{t+1}(F_{t}(\bar X_t(\xi^t),\bar U^b_t(\xi^t),U^a_{t+1},\xi^{t+1}),\xi^{t+1})\},
    \\
    \bar X_{t+1}(\xi^{t+1})
    &= F_{t}(\bar X_t(\xi^t),\bar U^b_t(\xi^t),\bar U^a_{t+1}(\xi^{t+1}),\xi^{t+1})
  \end{align*}
  for all $t=0,\ldots,T-1$ and $P$-almost every $\xi$.
\end{theorem}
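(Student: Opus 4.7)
The plan is to establish the claim by backward induction on $t$, showing that the Bellman value $J_t$ coincides with the minimal conditional cost-to-go and that the argmin conditions characterize when this minimum is attained. Given any admissible $(X, U^b, U^a) \in \N$ with $X_0 = x_0$, I would introduce the cost-to-go
\[
\C_t(\xi^t) := \EE\left[\sum_{s=t}^{T-1} L_s(X_s, U^b_s, U^a_{s+1}, \xi^{s+1}) + J(X_T, \xi^T) \,\Big|\, \xi^t\right]
\]
so that the objective of~\eqref{oc} equals $\EE[\C_0]$, and prove by downward induction on $t$ that $J_t(X_t, \xi^t) \le \C_t(\xi^t)$ almost surely, with equality throughout $s \ge t$ if and only if each $(U^b_s, U^a_{s+1})$ attains the corresponding argmin in the statement.

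The base case $t = T$ holds since $J_T = J = \C_T$. For the inductive step, the tower property gives $\C_t = \EE[L_t(X_t, U^b_t, U^a_{t+1}, \xi^{t+1}) + \C_{t+1} \mid \xi^t]$; applying the inductive hypothesis and the dynamics $X_{t+1} = F_t(X_t, U^b_t, U^a_{t+1}, \xi^{t+1})$ yields
\[
\C_t(\xi^t) \ge \EE\left[\inf_{u^a}\left\{L_t(X_t, U^b_t, u^a, \xi^{t+1}) + J_{t+1}(F_t(X_t, U^b_t, u^a, \xi^{t+1}), \xi^{t+1})\right\} \,\Big|\, \xi^t\right] \ge J_t(X_t, \xi^t),
\]
where the second inequality takes the infimum over $u^b$ outside the conditional expectation and uses~\eqref{be}. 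Equality at stage $t$ is exactly the pair of argmin conditions on $U^a_{t+1}$ and $U^b_t$. Evaluating at $t = 0$ and taking the expectation gives $\EE[\text{cost}] \ge J_0(x_0)$ for every admissible policy, so any policy attaining equality is optimal. Conversely, measurable selection from the (nonempty) argmin sets produces an admissible $(\bar X, \bar U^b, \bar U^a)$ achieving $J_0(x_0)$, and the ``only if'' direction follows because strict inequality at any stage propagates to strict inequality in $\EE[\C_0]$.

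The main obstacle is the justification of the two inf--$\EE$ interchanges and the accompanying measurable selection. The inner reduction---replacing $\EE[\inf_{u^a} h(u^a, \xi^{t+1}) \mid \xi^t]$ by $\inf_{U^a}\EE[h(U^a(\xi^{t+1}), \xi^{t+1}) \mid \xi^t]$ over $\F_{t+1}$-measurable selectors, and likewise for the $\F_t$-measurable $U^b_t$---is precisely the normal-integrand interchange rule of~\cite[Chapter~14]{rw98}. Its applicability depends on each $J_{t+1}$ being a lower bounded convex normal integrand (a hypothesis of the theorem), on $L_t$ and $F_t$ possessing compatible normal-integrand structure, and on the lower bound to rule out $\infty - \infty$ ambiguities and legitimize the tower property. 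Preserving the nonanticipativity constraint $(X, U^b, U^a) \in \N$ through the selections is the most delicate point; the full details, including stability of the normal-integrand property under the parametric infima that appear on the right-hand side of~\eqref{be}, are carried out in~\cite[Sect.~6]{ccdmt} and~\cite[Thm.~1]{pp25a}.
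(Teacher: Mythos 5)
The paper does not prove this theorem at all: it is imported verbatim as \cite[Theorem~1]{pp25a}, with the surrounding text only remarking that the Bellman equations~\eqref{be} can be derived from the decision--hazard--decision framework of \cite[Sect.~6]{ccdmt}. So there is no in-paper proof to compare against; what you have written is the standard verification argument, and it is the right skeleton. Your backward induction $J_t(X_t,\xi^t)\le\C_t(\xi^t)$, with equality characterized by attainment of the two nested argmins, is exactly how such optimality principles are established, and you correctly isolate the two places where all the real work sits: (i) the interchange of infimum and (conditional) expectation over spaces of measurable selectors, which is what turns the pointwise recursion~\eqref{be} into a statement about the optimum value over $\N$, and (ii) measurable selection from the argmin sets to get the existence half and to make $(\bar X,\bar U^b,\bar U^a)$ adapted. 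Both are handled in \cite{pp25a} via the normal-integrand calculus (conditional expectations of normal integrands and the interchange rule), which is why the theorem's hypothesis that each $J_t$ be a lower bounded convex normal integrand is not decorative.

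Two points you should tighten if you were to write this out in full. First, the well-definedness of $\C_t$ and the validity of the tower property are not free when $L_t$ and $J$ take the value $+\infty$: you need the lower bound by an integrable function to have quasi-integrability, and you should dispose of the degenerate case where the optimum value is $+\infty$ (lower boundedness only excludes $-\infty$). Second, your phrase ``equality at stage $t$ is exactly the pair of argmin conditions'' conflates two things: equality $\C_t=J_t(X_t,\xi^t)$ requires the argmin conditions at stage $t$ \emph{and} the inductive equality $\C_{t+1}=J_{t+1}(X_{t+1},\xi^{t+1})$ a.s.; and in the ``only if'' direction the propagation of a strict inequality on a positive-probability set to $\EE[\C_0]$ needs the a.s.\ inequalities $J_t(X_t,\xi^t)\le\C_t(\xi^t)$ to be integrated against conditional expectations carefully (again using the integrable lower bound). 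Neither issue is fatal; both are resolved in the cited reference.
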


The following gives sufficient conditions for the existence of solutions to the
Bellman equations~\eqref{be}. Given a normal integrand
$h:\reals^n\times\Omega\to\ereals$, the function~$h^\infty$, obtained from $h$ by defining
$h^\infty(\cdot,\omega)$, for each $\omega\in\Omega$, as the {\em recession function} of
$h(\cdot,\omega)$, is a positively homogeneous convex normal integrand; see
e.g.~\cite[Example~14.54a]{pp24} or~\cite[Theorem~1.39]{pp24}. The following
is~\cite[Theorem~2]{pp25a} with the first stage running cost redefined as
$+\infty$ unless $X_0=x_0$.

\begin{theorem}[Existence of solutions]\label{existoc}
  Assume that the set
  \begin{multline*}
    \Big\{(X,U^b,U^a)\in\N \, \Big\vert \,
    \sum_{t=0}^{T-1}  L^\infty_t(X_t,U^b_t,U^a_{t+1})+ J^\infty(X_T)\le 0,\\
    X_0=0\eqsepv X_t= A_tX_{t-1} +  B^b_tU^b_{t-1} + B^a_tU^a_t\eqsepv t=1,\dots,T\ a.s.
    \Big\}
  \end{multline*}
  is linear. Then the control problem~\eqref{oc} has an optimal solution and the
  associated Bellman equations~\eqref{be} have a unique solution $(J_t)_{t=0}^T$
  of lower-bounded convex normal integrands.
\end{theorem}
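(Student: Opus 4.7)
My plan is to prove the two conclusions—existence of an optimizer for problem~\eqref{oc} and existence/uniqueness of the Bellman solution—by reducing to standard results of convex stochastic programming and then building the $J_t$'s by backward induction.

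For the existence of an optimal control, I would recast~\eqref{oc} as the minimization of a convex integral functional over the vector space $\N$ of adapted processes, with the initial condition and state equations~\eqref{oc_c} imposed as affine constraints. Standard recession calculus for convex normal integrands (see e.g.~\cite[Example~14.54, Theorem~14.60]{rw98}) identifies the set displayed in the theorem statement as the recession cone of the feasible set within the zero-sublevel set of the objective. The hypothesis that this cone is a linear subspace is the classical ``no unbounded direction of descent'' condition, under which coercivity modulo linear directions holds and existence of a minimizer follows from general existence theory for integral functionals over adapted processes.

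For the Bellman equations, I would proceed by backward induction starting from $J_T:=J$. Given a lower-bounded convex normal integrand $J_{t+1}$, define
\[
h_t(x_t,U^b_t,U^a_{t+1},\xi^{t+1}):=L_t(x_t,U^b_t,U^a_{t+1},\xi^{t+1})+J_{t+1}(F_t(x_t,U^b_t,U^a_{t+1},\xi^{t+1}),\xi^{t+1}).
\]
Since $F_t$ is affine in $(x_t,U^b_t,U^a_{t+1})$ with $\F_{t+1}$-measurable coefficients and $J_{t+1}$ is a convex normal integrand, the chain rule for normal integrands yields that $h_t$ is again a lower-bounded convex normal integrand. Partial minimization in $U^a_{t+1}$ preserves the normal-integrand property (see~\cite[Theorem~14.60]{rw98}), and the recession-linearity assumption propagates to the per-stage problem so that the infimum is attained for almost every $\xi^{t+1}$ by a measurable minimizer. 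Taking the $\xi^t$-conditional expectation preserves convexity, lower-boundedness and the normal-integrand structure, and a final partial minimization over $U^b_t$ produces $J_t$.

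Finally, the identification of $J_0(x_0)$ with the optimum value of~\eqref{oc} follows by substituting the measurable minimizers stage by stage and invoking the tower property of conditional expectation, and uniqueness of the $J_t$'s in the class of lower-bounded convex normal integrands holds because any such solution must coincide with the optimum value of the tail problem starting from $(t,x_t,\xi^t)$. The main obstacle is the propagation of the \emph{global} recession-linearity assumption through the backward induction—checking that after each partial infimization and conditional expectation the resulting integrand inherits enough coercivity for the next stagewise optimum to be attained and lower bounded. This bookkeeping with recession functions of integrands is delicate, and is exactly what the cited result~\cite[Theorem~2]{pp25a} provides.
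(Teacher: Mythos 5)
The paper does not actually prove this theorem: it is imported verbatim as \cite[Theorem~2]{pp25a} (with the first-stage cost redefined to encode the initial condition), so there is no internal argument to compare yours against. Judged on its own terms, your outline correctly identifies the strategy that the cited reference follows --- backward induction on the Bellman recursion, alternating partial minimization and conditional expectation of convex normal integrands, with the linearity of the displayed recession cone playing the role of the classical ``no unbounded direction of descent'' condition that guarantees attainment and lower-boundedness at every stage.

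The genuine gap is the one you concede in your final sentence: deducing from the single \emph{global} recession-linearity hypothesis the \emph{stagewise} conditions needed for (i) attainment of the infima over $U^a_{t+1}$ and $U^b_t$, (ii) preservation of the normal-integrand property and of lower-boundedness under partial minimization, and (iii) well-definedness of the conditional expectation of the resulting integrand. This is not bookkeeping; it is essentially the entire mathematical content of the theorem. Partial minimization of a convex normal integrand need not produce a lower semicontinuous (hence normal) integrand, and need not preserve lower bounds, unless one verifies that the relevant stagewise recession functions have linear zero-sublevel sets; showing that these stagewise conditions follow from the global one requires an argument that any direction of recession of a stage-$t$ subproblem extends to a direction of recession of the full problem. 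Your proposal asserts this propagation and then defers it to \cite[Theorem~2]{pp25a}, which is the very statement being proved, so the argument is circular at its decisive step. Likewise, uniqueness of $(J_t)_{t=0}^T$ in the stated class does not follow merely from identifying each $J_t$ with the tail optimum value: one must show that \emph{any} lower-bounded convex normal-integrand solution of~\eqref{be} equals that value, which again relies on the attainment and interchange results whose validity rests on the recession analysis you have left open.
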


The following is from~\cite[Theorem~3]{pp25a}.

\begin{theorem}[Markovianity]\label{thm:markov}
  Let $(J_t)_{t=0}^T$ be a sequence of lower bounded normal integrands that
  solve~\eqref{be}. Assume that $\xi=(\xi_t)_{t=0}^T$ is a Markov process and that
  $L_{t-1}$, $A_t$, $B^b_t$, $B^a_t$ and $W_t$ depend on $\xi$ only through
  $(\xi^t,\xi^{t-1})$. Then $J_t$ depends on $\xi^t$ only through $\xi_t$. If
  $\xi$ is a sequence of independent random variables and if $L_{t-1}$, $A_t$,
  $B^b_t$, $B^a_t$ and $W_t$ do not depend on $\xi^{t-1}$, then $J_t$ do not
  depend on $\xi$ at all.
\end{theorem}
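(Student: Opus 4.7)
The plan is to prove both claims by backward induction on $t$, using the Markov property of $\xi$ to reduce a $\xi^t$-conditional expectation to a $\xi_t$-conditional expectation. The base case $t=T$ is $J_T=J$, which depends on $\xi^T$ only through $\xi_T$ by the hypotheses on the data (implicitly extended to $J$ itself, since $J$ plays the role of a terminal $L_T$).

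For the inductive step, suppose $J_{t+1}(x_{t+1},\xi^{t+1})$ depends on $\xi^{t+1}$ only through $\xi_{t+1}$, so that it can be written as $\tilde J_{t+1}(x_{t+1},\xi_{t+1})$. Then in the Bellman equation~\eqref{be}, the integrand
\[
L_t(x_t,U^b_t,U^a_{t+1},\xi^{t+1}) + \tilde J_{t+1}\bigl(F_t(x_t,U^b_t,U^a_{t+1},\xi^{t+1}),\xi_{t+1}\bigr)
\]
depends on $\xi$ only through $(\xi_t,\xi_{t+1})$, since by hypothesis $L_t$ and the coefficients $A_{t+1},B^b_{t+1},B^a_{t+1},W_{t+1}$ entering $F_t$ have exactly this dependence structure. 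By the Markov property, the regular conditional distribution $\nu_t(\xi^t,\cdot)$ of $\xi_{t+1}$ given $\xi^t$ depends on $\xi^t$ only through $\xi_t$, so for any lower bounded measurable $\eta$,
\[
\EE\bigl[\,\eta(\xi_t,\xi_{t+1})\bigm|\xi^t\bigr] = \int \eta(\xi_t,\xi_{t+1})\,\nu_t(\xi_t,d\xi_{t+1})
\]
depends on $\xi^t$ only through $\xi_t$. Taking the inner infimum over $U^a_{t+1}$ inside the integrand and the outer infimum over $U^b_t$ outside the conditional expectation preserves this property, so $J_t(x_t,\xi^t)$ is actually a function of $(x_t,\xi_t)$ only, closing the induction.

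For the second claim, independence of the $\xi_t$ together with the hypothesis that none of the data depend on $\xi^{t-1}$ means the integrand in~\eqref{be} depends on $\xi$ only through $\xi_{t+1}$; the conditional expectation given $\xi^t$ then reduces to the unconditional expectation under the distribution of $\xi_{t+1}$, which is just a real number. A parallel backward induction, with base case that $J_T=J$ is $\xi$-free, yields that each $J_t$ depends on $x_t$ alone.

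The main obstacle is essentially bookkeeping: one must check at each induction step that the property "depends only through $\xi_t$" is genuinely preserved by substituting into $F_t$, taking the inner infimum over $U^a_{t+1}$, and integrating against $\nu_t$. Existence of the regular conditional distribution as a kernel $\nu_t(\xi_t,d\xi_{t+1})$ is the content of the Markov assumption, and the pointwise integral representation introduced in~\S\ref{Stochastic_dynamic_programming} makes the reduction from $\xi^t$- to $\xi_t$-measurability transparent. Normality and lower boundedness of the resulting $J_t$ are already supplied by Theorem~\ref{existoc}, so no additional regularity needs to be verified along the induction.
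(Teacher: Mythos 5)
The paper itself does not prove Theorem~\ref{thm:markov}: it is imported verbatim from \cite[Theorem~3]{pp25a}, so there is no in-paper argument to compare against. Your backward induction is the natural proof of such a statement and is essentially sound: once $J_{t+1}$ is a function of $(x_{t+1},\xi_{t+1})$ alone, the integrand in~\eqref{be} (after the inner infimum over $U^a_{t+1}$) depends on $\xi$ only through $(\xi_t,\xi_{t+1})$, and the Markov property lets you replace $\nu_t(\xi^t,\cdot)$ by a kernel in $\xi_t$, so the outer infimum produces a function of $(x_t,\xi_t)$. A few points deserve care. First, your base case needs the terminal cost $J$ to depend on $\xi$ only through $\xi_T$; this is not among the hypotheses listed in the theorem (which constrain only $L_{t-1}$, $A_t$, $B^b_t$, $B^a_t$, $W_t$), and the standing assumption in \S\ref{Stochastic_dynamic_programming} only gives dependence through $\xi^T$ --- you correctly flag this as an implicit extra assumption, and it genuinely has to be added for the induction to start. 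Second, the Bellman equations and the Markov identification of the conditional kernel hold only for $P$-almost every $\xi$, so the precise conclusion is that $J_t$ admits a version depending only on $(x_t,\xi_t)$; this is a standard caveat but worth stating. Third, your appeal to Theorem~\ref{existoc} for normality and lower boundedness is unnecessary, since these are hypotheses of Theorem~\ref{thm:markov} itself. Finally, note that the statement's ``$(\xi^t,\xi^{t-1})$'' must be read as ``$(\xi_t,\xi_{t-1})$'' (compare \S\ref{subsec:dmp}), which is the reading your argument uses.
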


\subsection{Discretization of Markov processes}\label{subsec:dmp}

In general, the Bellman equations~\eqref{be} do not allow for analytical
solutions. We will therefore resort to numerical approximations and assume, as
in \autoref{thm:markov}, that there is a Markov process $\xi=(\xi_t)_{t=0}^T$ such
that $L_{t-1}$, $A_t$, $B_t$ and $W_t$ depend on $\xi$ only through
$(\xi_{t-1},\xi_t)$. 
We will first approximate the Markov process~$\xi$ by a finite-state Markov chain,
and then use the Stochastic Dual Dynamic Programming (SDDP) algorithm to solve
the discretized problem; see \S\ref{subsec:sddp} below.

We will discretize $\xi$ using integration quadratures as
in~\cite{tauchen1991quadrature} in the context of option pricing. To that end,
we assume that the $\xi_t$-conditional distribution of $\xi_{t+1}$ has density
$p_t(\cdot|\xi_t)$. Given a strictly positive probability density $\phi_{t+1}$ on
$\reals^{d_{t+1}}$, the conditional expectation of a random variable
$\xi\mapsto\eta(\xi^{t+1})$ will be approximated by
\begin{align*}
\EE[\eta |\xi_t] &= \int_{\reals^{d_{t+1}}}\eta(\xi_{t+1}) p_t(\xi_{t+1}|\xi_t)d\xi_{t+1} \\
&=\int_{\reals^{d_{t+1}}}\eta(\xi_{t+1})\frac{p_t(\xi_{t+1}|\xi_t)}{\phi_{t+1}(\xi_{t+1})}\phi_{t+1}(\xi_{t+1})d\xi_{t+1}\\
&\approx \sum_{i=1}^N\eta(\xi_{t+1}^i)\frac{p_t(\xi_{t+1}^i|\xi_t)}{\phi_{t+1}(\xi_{t+1}^i)}w_{t+1}^i,
\end{align*}
where $(\xi_{t+1}^i,w_{t+1}^i)_{i=1}^N$ is a quadrature approximation of the
density $\phi_t$. A natural choice for $\phi_{t+1}$ would be the unconditional density
of $\xi_{t+1}$ provided it is available in closed form. Constructing such
approximations for $t=0,\ldots,T$ and conditioning each $\xi_{t+1}$ with the quadrature
points $\xi_t^i$ obtained at time $t$ results in a finite state Markov chain
where, at each~$t$, the process~$(\xi_t)_{t=0}^T$ takes values in the finite set
$\{\xi_t^j\}_{j=1}^N$ and the transition probability from $\xi_t^j$ to
$\xi_{t+1}^i$ is given by
\[
p_t^{ji}:=\frac{p_t(\xi_{t+1}^i|\xi^j_t)}{\phi_{t+1}(\xi_{t+1}^i)}w_{t+1}^i.
\]
Sufficient conditions for the weak convergence of such discretizations are discussed in~\cite{kar75}.

\subsection{Stochastic dual dynamic programming algorithm}\label{subsec:sddp}

Stochastic dual dynamic programming (SDDP) algorithm is a computational technique for solution of Bellman equations in convex stochastic optimization problems governed by finite-state Markov chains. The algorithm is based on successive approximation of the cost-to-go functions $(J_t)_{t=0}^T$ by polyhedral lower approximations. The SDDP algorithm was introduced
in~\cite{Pereira-Pinto:1991} for stagewise independent processes $\xi$ and later extended to finite-state Markov processes $\xi$ in~\cite{im96}; see also~\cite{pm12,lohndorf2019modeling} and the references there. The following version of the SDDP algorithm from \cite{pp25a} applies to control problems of form~\eqref{oc}. Note that, when the underlying process $\xi$ is a finite-state Markov chain, the conditional expectations below are finite sums weighted by the transition probabilities.

{\footnotesize
  \begin{enumerate}
  \item[0.]
    {\bf Initialization:} Choose convex (polyhedral) lower-approximations $J_t^0$ of the cost-to-go functions $J_t$ and set $k=0$.
  \item
    {\bf Forward pass:} Sample a path $\xi^k$ of $\xi$ and define $X^k$ by $X_0^k=x_0$ and
    \begin{align*}
U_t^{bk}&\in\argmin_{U^b_t\in\reals^{M^b}}\EE\left[\inf_{U^a_{t+1}\in\reals^{M^a}}\{L_t(X^k_t,U^b_t,U^a_{t+1}) + J^k_{t+1}(F_{t}(X^k_t,U^b_t,U^a_{t+1}))\}
                \Bigg{|}\xi^k_t\right],\\
      U_{t+1}^{ak} &\in\argmin_{U^a_{t+1}\in\reals^{M^a}}\{L_t(X^k_t,U^{bk}_t,U^a_{t+1},\xi^k_t,\xi^k_{t+1}) + J^k_{t+1}(F_{t}(X^k_t,U^{bk}_t,U^a_{t+1},\xi^k_t,\xi^k_{t+1}),\xi^k_{t+1})\},\\
      X^k_{t+1} &= F_{t}(X^k_t,U^{bk}_t,U^{ak}_{t+1},\xi^k_t,\xi^k_{t+1}).
    \end{align*}
  \item
    {\bf Backward pass:} Let $J_T^{k+1}:=J$ and, for $t=T-1,\ldots,0$,
    \begin{align*}
      \tilde J_t^{k+1}(X_t^k,\xi_t^k)
      &:= \inf_{U^b_t\in\reals^{M^b}}\EE\left[\inf_{U^a_{t+1}\in\reals^{M^a}}\{L_t(X^k_t,U^b_t,U^a_{t+1})
        + J^{k+1}_{t+1}(F_{t}(X^k_t,U^b_t,U^a_{t+1}))\}\Bigg{|}\xi^k_t\right],
      \\  
      V^{k+1}_t&\in\partial\tilde J^{k+1}_t(X_t^k,\xi_t^k)
    \end{align*}
    and let
    \[
      J_t^{k+1}(x_t,\xi_t^k) := \max\{J_t^k(x_t,\xi_t^k),\tilde J_t^{k+1}(X_t^k,\xi_t^k) + V_t^{k+1}\cdot(x_t-X^k_t)\}\quad\forall x_t\in\reals^N.
    \]
    Set $k:=k+1$ and go to 1.
  \end{enumerate}
}
Above, the dot ``$\cdot$'' stands for the Euclidean inner product. The minimization problems in the above algorithm are two-stage stochastic optimization problems while the original problem~\eqref{oc} has $T$ decision stages. Note also that the third minimization in the forward pass is one of the second-stage problems already solved in the second minimization. The functions $J_t^k$ generated by the algorithm are polyhedral lower approximations of the true cost-to-go functions $J_t$. It follows that the minimization
problems in the above recursions are finite-dimensional optimization problems that can be solved with standard convex optimization solvers. If the functions $L_t$ and $J$ are polyhedral, the problems are LP problems.

In the decision-observation format, where only the control $U^b$ is present, the two-stage problems reduce to a single-stage stochastic optimization problem, while in the observation-decision format without the $U^b$ control, only
deterministic single-stage problem need to be solved. This version of the algorithm is implemented e.g.\ in the Julia package described in~\cite{dowson2021sddp}.

\section{Computational results}\label{sec:comp}

In \S\ref{subsec:Data}, we provide numerical data for the model presented
in Section~\ref{sec:example}. In \S\ref{subsec:conv}, we present computational results obtained with the SDDP algorithm for the storage management problem. In particular, we study the accuracy and computational times obtained when the number $N$ of quadrature points is increased. \S\ref{Sensitivity_analysis} builds on the numerical optimization and calculates the indifference prices of different kinds of energy storages.

\subsection{Data}
\label{subsec:Data}

In the numerical studies below, we consider the problem of battery management over one day in the intraday market. We make hourly decisions, so the number of decision stages is $T=24$. We set the leakage rate to $l=0$ as there is practically no leakage in a 24-hour period considered here. Furthermore, we set the discharging efficiency to $c^-=1.05$, the charging efficiency to $c^+=0.95$, and the bid-ask spread coefficient to $\delta=1$€. These values are not derived from empirical data, but can be easily adjusted without affecting the computations. Throughout this section, we assume that the maximum charging and discharging speeds are equal, and we express them as a fraction $\alpha$ of the battery capacity. In other words, $\overline U=\underline U=\alpha\overline X^e$.

We discretize the one-dimensional
Markov process $\xi$ from \S\ref{subsec:price_model} using the conditional quadratures described in \S\ref{subsec:dmp}. We choose the sampling densities $\phi_t$ to be Gaussian with zero mean and variance $\sigma^2$. The corresponding measures are then approximated by Gauss-Hermite quadrature. For simplicity, we use the same number~$N$ of quadrature points in each stage $t$. We solve the resulting discretized Bellman equations with the Julia package {\tt SDDP.jl}~\cite{dowson2021sddp} which implements the Markov chain SDDP algorithm outlined in \S\ref{subsec:sddp}. Figure~\ref{fig:markov-chain} illustrates the discretization of the log-price over~15 days with $N=3$ quadrature points per time.

\begin{figure}[!ht]
  \centering  \includegraphics[width=.7\textwidth]{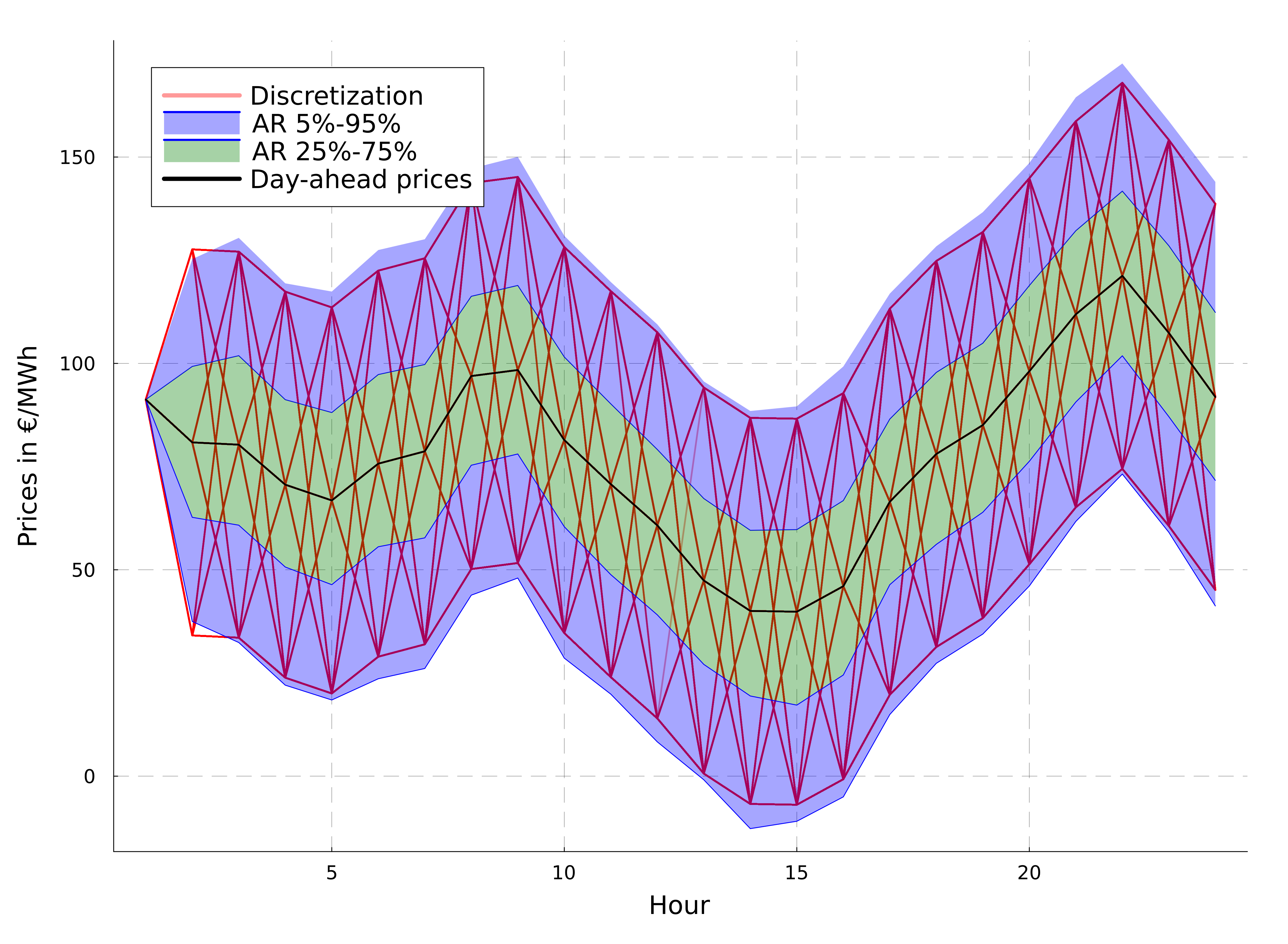}
  \caption{AR-1 process and its discretization as a Markov chain with 3 quadrature points per period \label{fig:markov-chain}}
\end{figure}


\subsection{Optimization of storage operation}\label{subsec:conv}

We study the performance of SDDP as we increase the number $N$ of quadrature points per period. For each $N$, we run 1000 iterations of SDDP. The risk aversion parameter $\rho$ is set to $0.03$ and the capacity to $\bar X_e=1$MWh. In the convergence analysis of this subsection, we will assume that $\alpha:=\overline U/\overline X^e=0.4$.

Table~\ref{tab:computation_times} gives the computation times for 1000 iterations of the SDDP algorithm for increasing the number $N$ of quadrature points per stage. The computation time increases roughly linearly with~$N$.

\begin{table}[!ht]
  \centering
  \begin{tabular}{lrrrrrrr}
    \hline
    $N$ & 1 & 2 & 4 & 8 & 16 & 32 & 64 \\
    \hline
    Time (s) & 201 & 44 & 77 & 144 & 270 & 820 & 1820  \\
    \hline
  \end{tabular}
  \caption{Time to compute 1000 iterations
    of SDDP as we increase the number~$N$ of quadrature points in the discretization of the AR process. \label{tab:computation_times}}
\end{table}

Each backward pass of the SDDP algorithm gives an upper bound for the optimum value in terms of the piecewise linear approximations of the Bellman functions. The convergence of the upper bounds is depicted in
Figure~\ref{fig:conv-sddp} for varying number $N$ of grid points per stage. In the deterministic case where $N=1$, the algorithm converges in less than 10 iterations. The convergence becomes slower when $N$ increases, but for $N\ge 4$, the upper bounds almost coincide.

\begin{figure}[!ht]
  \centering  \includegraphics[height=.5\textwidth,width=0.7\textwidth]{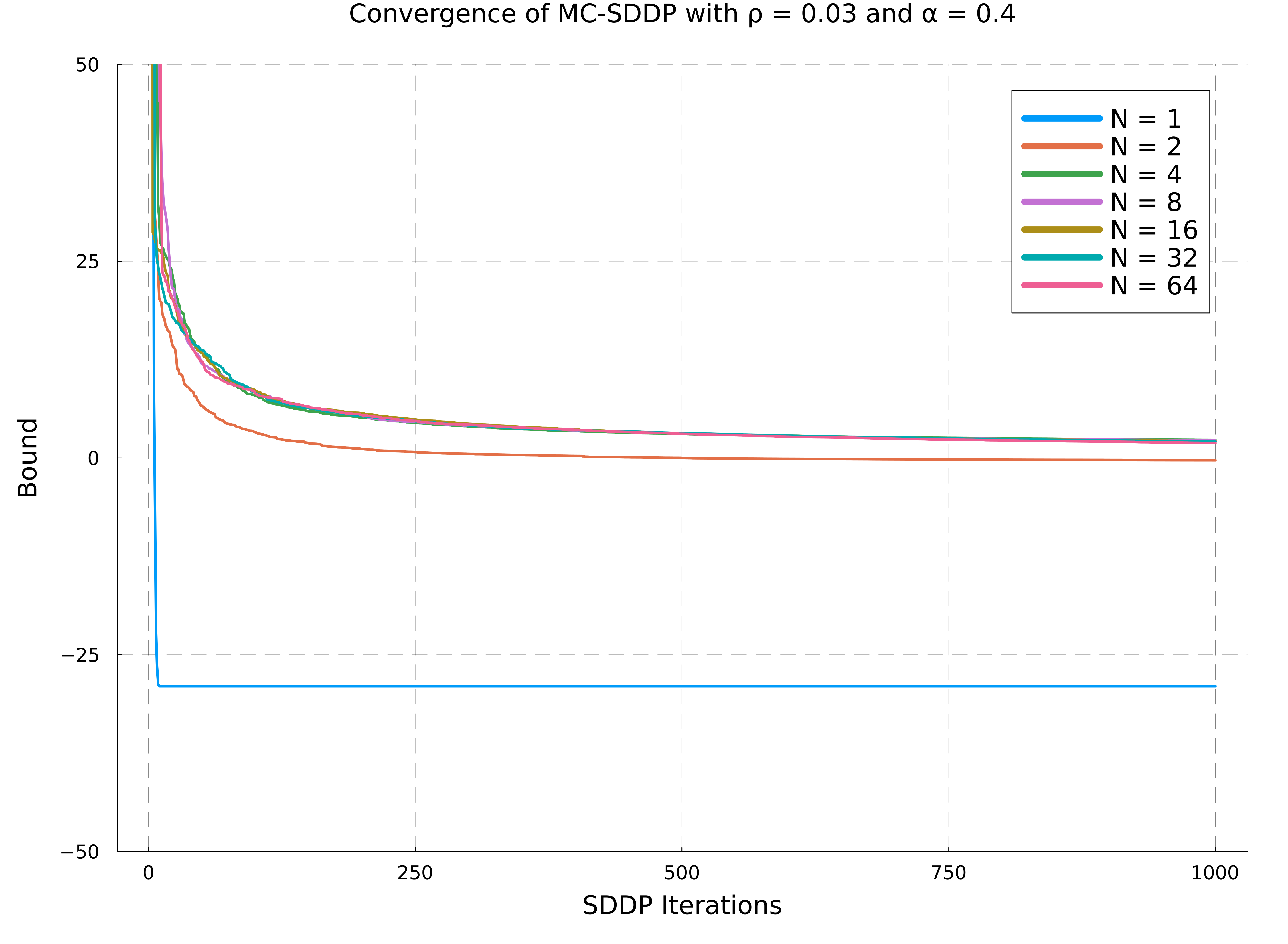}
  \caption{Convergence of SDDP as we increase the number of quadrature points in the discretization.
  \label{fig:conv-sddp}}
\end{figure}


In order to evaluate the quality of the cost-to-go functions provided by the
SDDP algorithm, we perform an out-of-sample evaluation along 10000 scenarios
randomly drawn from the continuously distributed Markov process described in
\S\ref{subsec:price_model}. In each scenario, we implement the control
obtained as in the forward pass of the SDDP algorithm as described in
\S\ref{subsec:sddp}. However, since the random out-of-sample values of
$\xi_t$ will not coincide with the Markov states $s_t^i$ in the finite-state
approximation, we use the control obtained in the state $s_t^i$ nearest to the
sampled value of $\xi_t$.

Figure~\ref{fig:assess-lb} plots the out-of-sample estimates of the objective
values together with the upper bounds found after 1000 iterations of the SDDP
algorithm. The figure also plots "in-sample" averages obtained by evaluating
the objective along scenarios randomly drawn from the finite-state Markov chain
approximation of the continuously distributed Markov process $\xi$.  The
out-of-sample averages increase and tend towards the SDDP upper bound as we
increase $N$. This is expected, as the Markov discretization is an approximation of the reference AR-1 process. However, for $N>8$, the improvements seem marginal.

\begin{figure}[!ht]
  \centering  \includegraphics[width=0.7\textwidth,height=0.45\textwidth]{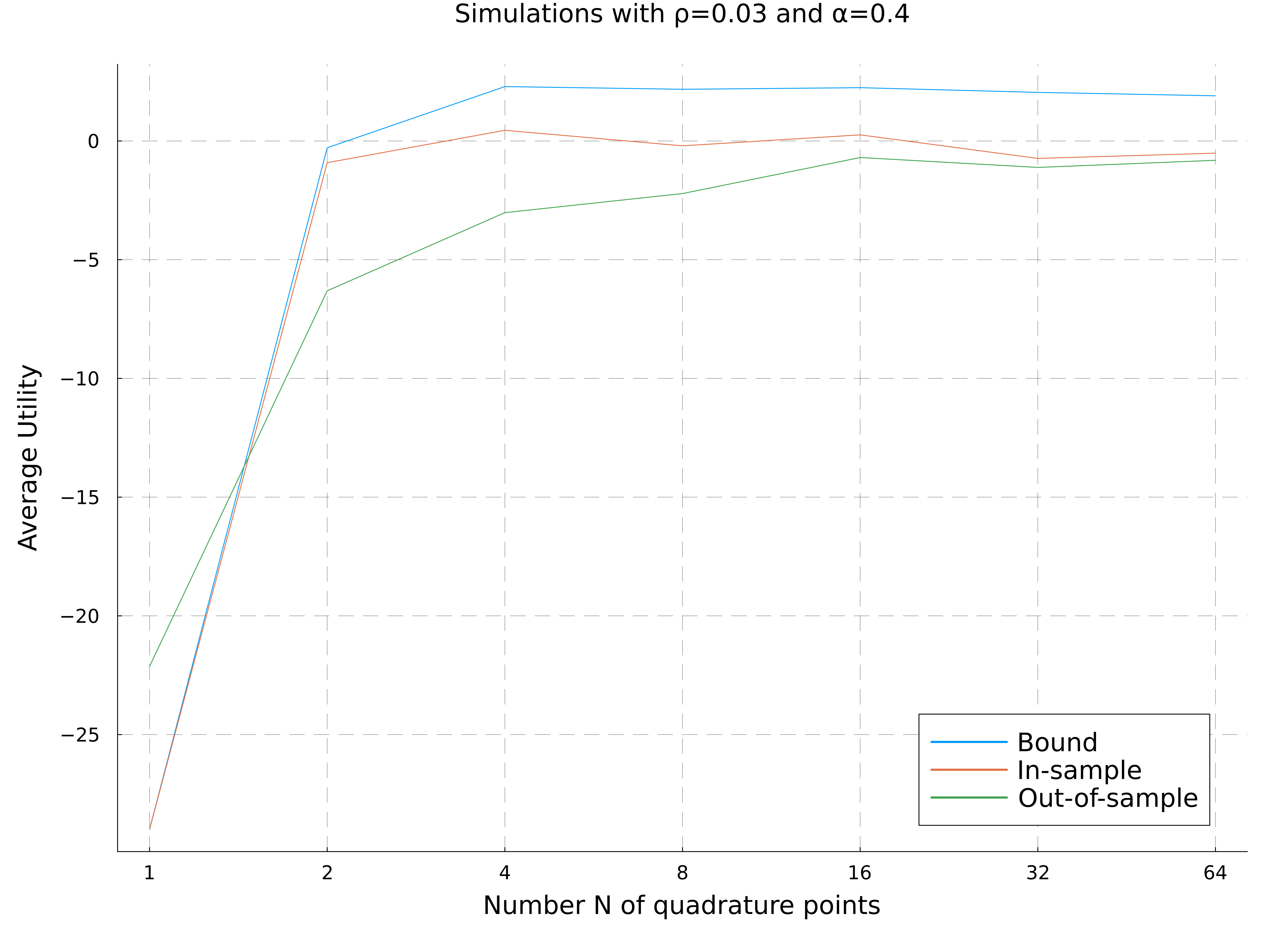}
  \caption{
    Optimum value estimates for increasing number of quadrature points. The red and blue values are given by SDDP while the green
    values are obtained with out of sample simulation using the optimized Bellman functions.
    \label{fig:assess-lb}}
\end{figure}




\subsection{Valuation of energy storages}
\label{Sensitivity_analysis}

As observed in \S\ref{subsec:conv}, the SDDP algorithm gives fairly accurate results already with $N = 8$ quadrature points per period. From now on, we fix $N=8$ and proceed to compute indifference prices of storage using the numerical evaluations of the optimum value function of~\eqref{sps} as described in
\S\ref{subsec:idp}. Since we are working with the exponential utility function, the indifference prices are given by Proposition~\ref{prop:idp} after the solution of just one instance of the battery management problem. This section studies the dependence of the optimal solutions and the indifference prices when we vary the parameters of the battery and the trader's risk aversion.

Figure~\ref{fig:sensitivity:risk} displays kernel-density plots of the terminal wealth obtained with strategies optimized for different risk aversion. As expected, higher risk aversion results in a thinner left tail of the terminal wealth distribution. The plots the kernel density estimates for two values of the charging speed. The effect of the risk aversion seems to influence the terminal wealth more for batteries with higher charging speed. A highly risk-averse agent will tend to operate a battery more sparingly due to the market price risk. A less risk-averse agent, on the other hand, tends to take full advantage of a high-speed battery. 

\begin{figure}[!ht]
  \centering
  \begin{tabular}{c}
  \includegraphics[height=.3\textwidth,width=0.7\textwidth]{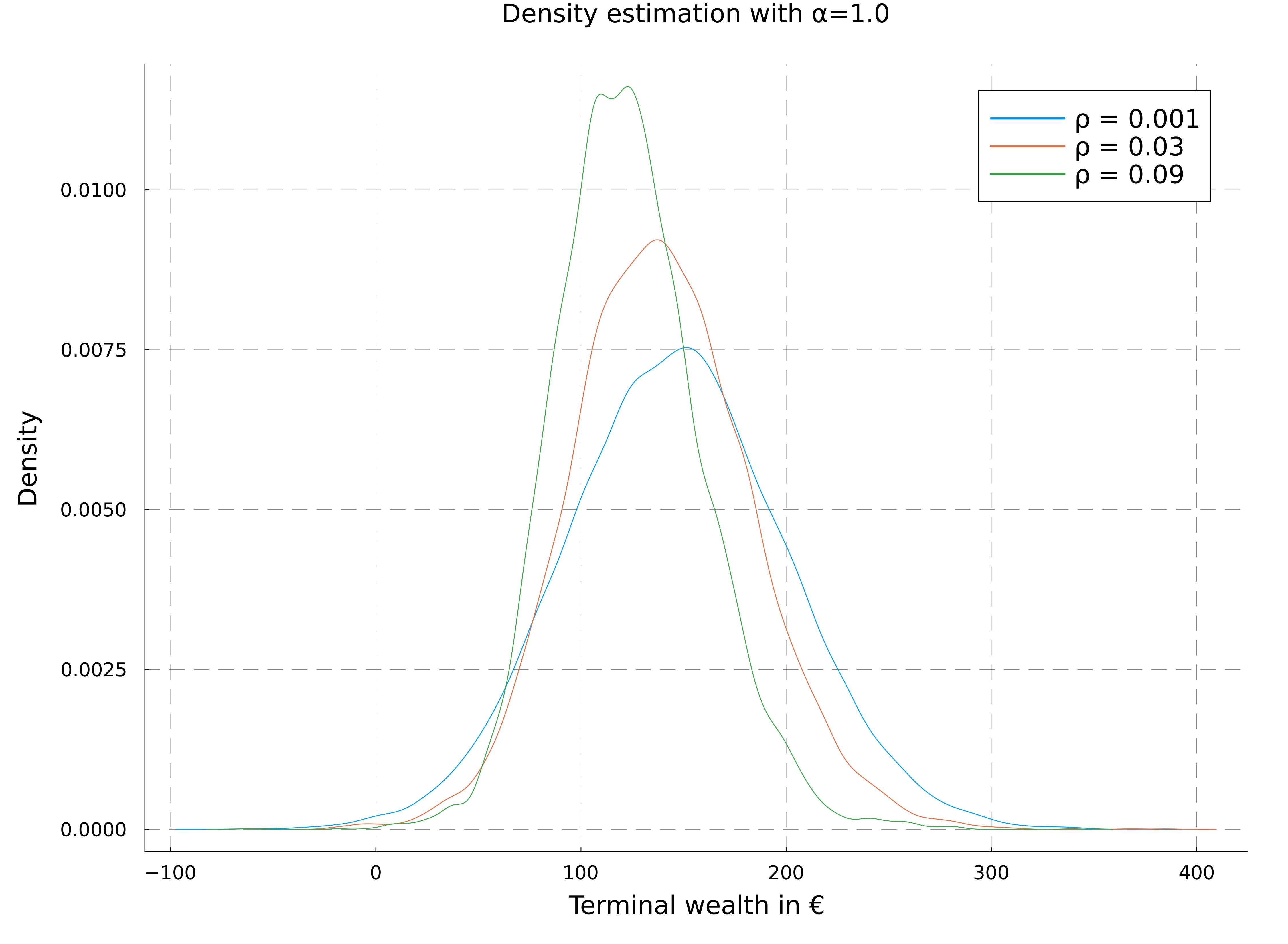} \\
  \includegraphics[height=.3\textwidth,width=0.7\textwidth]{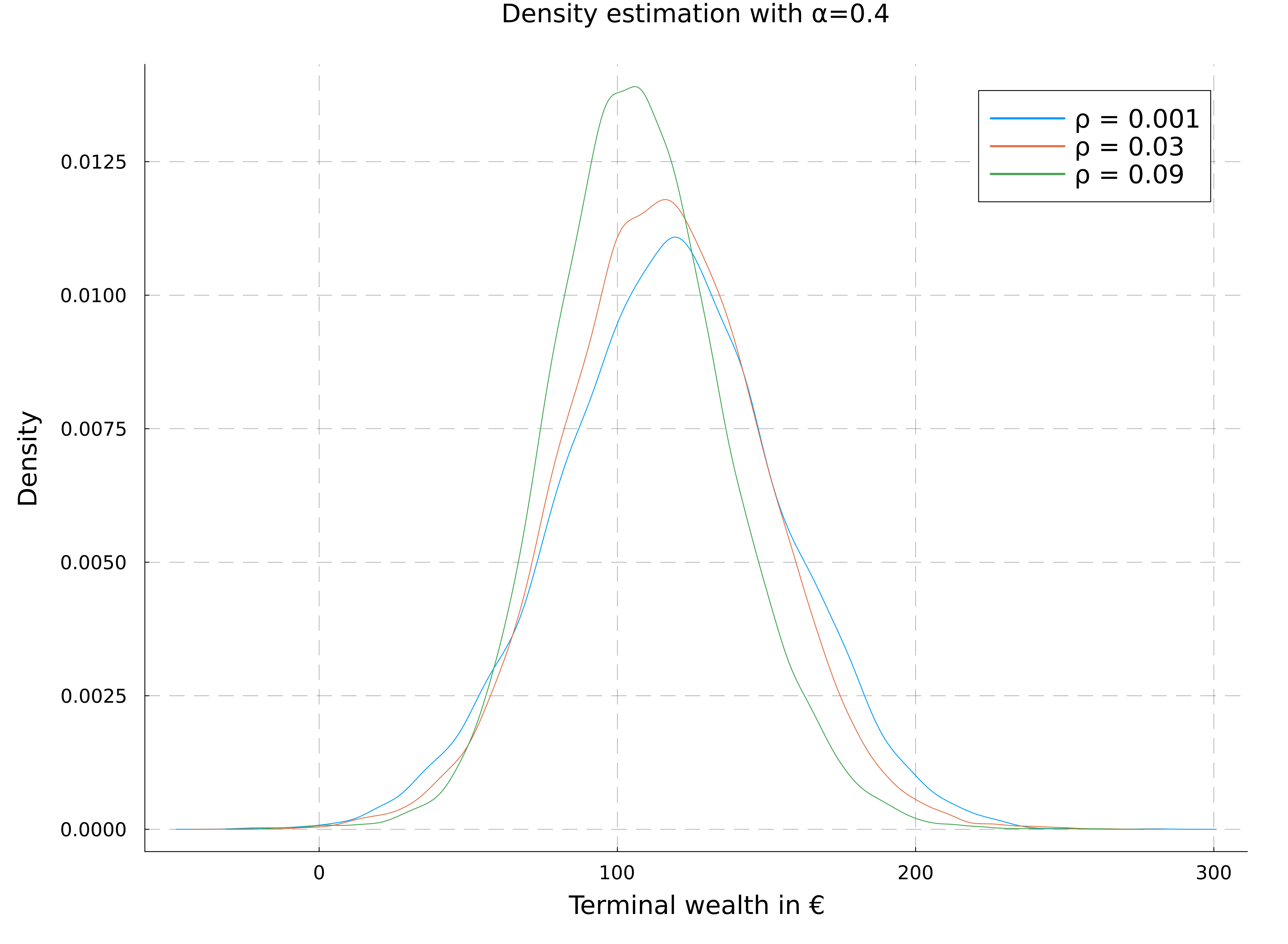}
  \end{tabular}
  \caption{
    Kernel density plots of the terminal wealth in out-of-sample simulations using 10000 scenarios for different values of the risk aversion parameter $\rho$ in the exponential utility function~\eqref{eq:exp}. The charging speed is set to $\alpha=0.4$ for the top figure, and to $\alpha=1$ for the bottom figure. The capacity is set to $\bar X_e=1$MWh. \label{fig:sensitivity:risk}
  }
\end{figure}

Figure~\ref{fig:indifference:capacity} plots indifference prices for
batteries with different capacities and agents with different risk aversions. As one might expect, the valuation of a storage increases with the storage capacity. With higher risk aversion, however, the valuations seem to saturate for larger capacities. This is quite natural as a highly risk-averse agent will avoid large positions in stored electricity due to the price risk. Figure~\ref{fig:indifference:rate} plots the indifference prices for storages with different charging speeds. Again, the valuations are more sensitive for less risk averse agents. Finally,
Figure~\ref{fig:indifference:rate} gives indifference prices for varying values
of the volatility parameter $\sigma$ in the electricity price model~\eqref{eq:ar_1}. The volatility can be thought of as the agent's uncertainty concerning the evolution of electricity prices. For all levels of risk aversion, higher volatility makes the storage more valuable to the agent.

\begin{figure}[!ht]
  \centering
  \includegraphics[width=.7\textwidth]{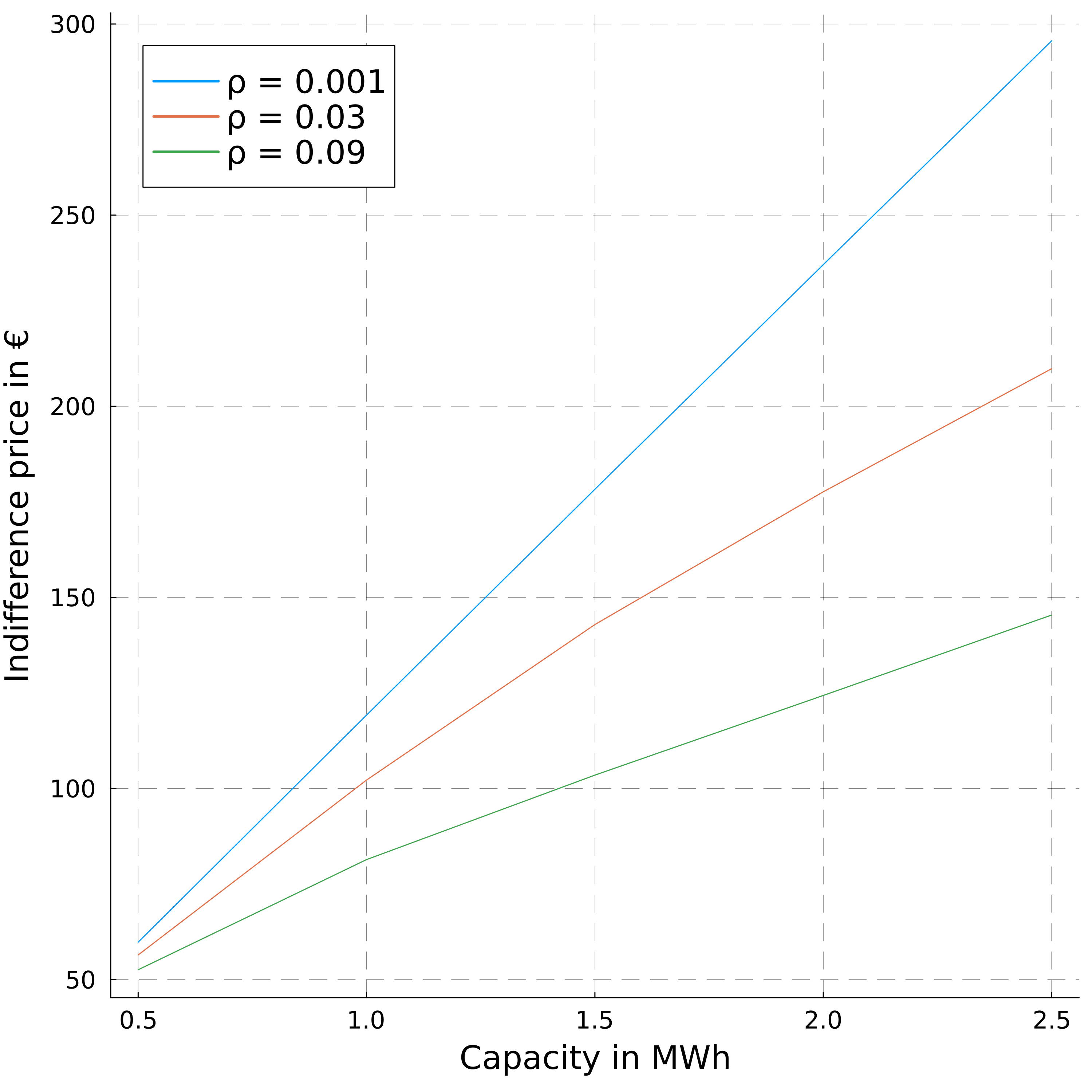}
  \caption{Indifference prices (in euros) of storages with different storage capacities.
    Different colors correspond to agents with different risk aversion parameters.
    Prices are given in euros and capacity in MWh. The charging speed is set to $\alpha=0.4$.
  \label{fig:indifference:capacity}}
\end{figure}

\begin{figure}[!ht]
  \centering
  \includegraphics[width=.7\textwidth]{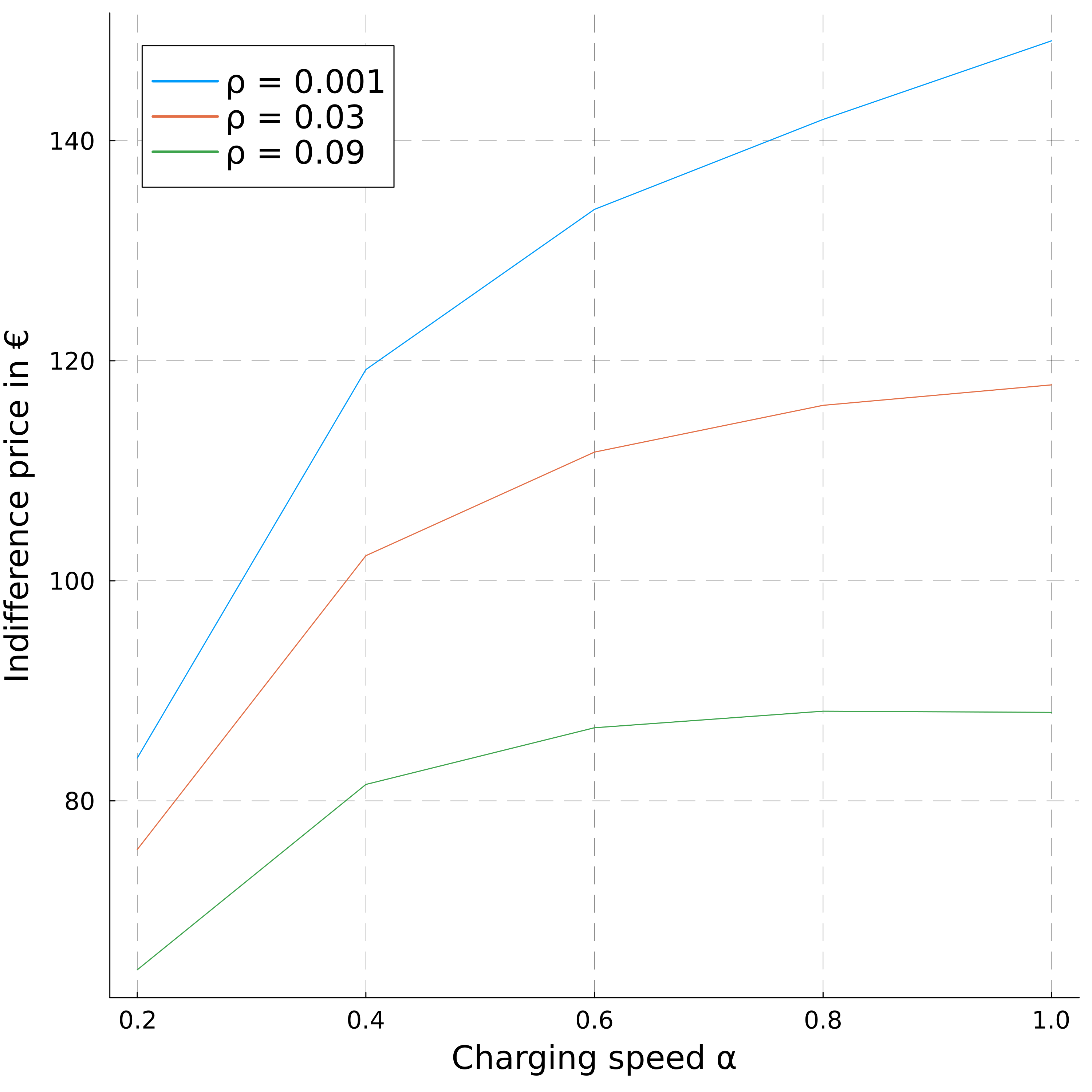}
  \caption{Indifference prices (in euros) of storages with different charging speeds.
    The horizontal axis gives the maximum hourly change in stored energy in units of the total storage capacity.
    Different colors correspond to agents with different risk aversion parameters. The capacity is set to $\bar X_m =1$MWh.
  \label{fig:indifference:rate}}
\end{figure}

\begin{figure}[!ht]
  \centering \includegraphics[width=.7\textwidth]{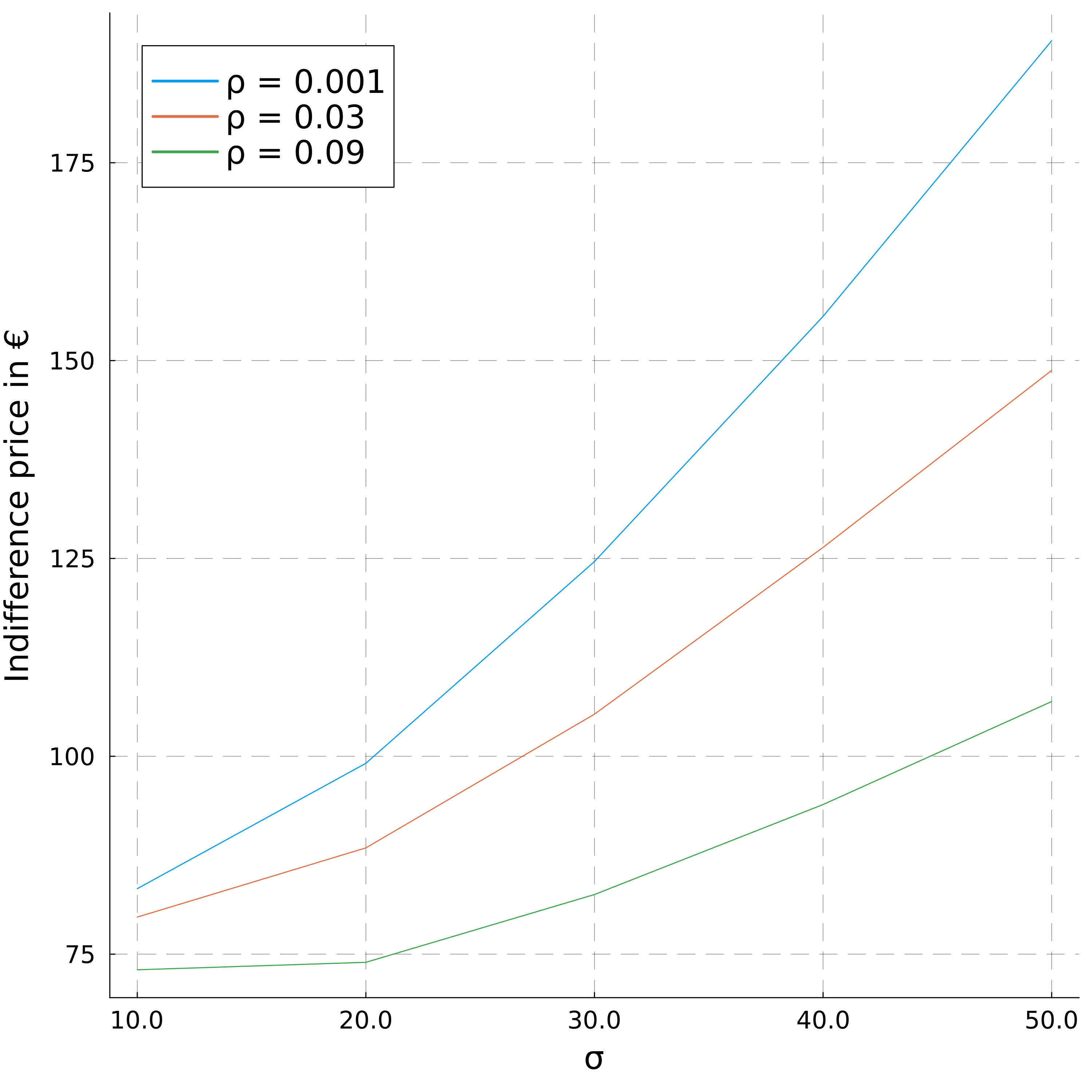}
  \caption{Indifference prices (in euros) obtained with different values of the volatility parameter $\sigma$ in the AR process~\eqref{eq:ar_1}. Different colors correspond to different risk aversions. The charging speed is set to $\alpha=0.4$ and the capacity to $\bar X_m =1$MWh.
  \label{fig:indifference:std}}
\end{figure}


\begin{thebibliography}{10}

\bibitem{arnold2011model}
M.~Arnold and G.~Andersson.
\newblock Model predictive control of energy storage including uncertain
  forecasts.
\newblock In {\em Power systems computation conference (PSCC), Stockholm,
  Sweden}, volume~23, pages 24--29. Citeseer, 2011.

\bibitem{ccgv}
G.~Callegaro, L.~Campi, V.~Giusto, and T.~Vargiolu.
\newblock Utility indifference pricing and hedging for structured contracts in
  energy markets.
\newblock {\em Math. Methods Oper. Res.}, 85(2):265--303, 2017.

\bibitem{car9}
R.~Carmona, editor.
\newblock {\em Indifference pricing: theory and applications}.
\newblock Princeton series in financial engineering. Princeton University
  Press, Princeton, NJ, 2009.

\bibitem{ccdmt}
P~Carpentier, J.-P. Chancelier, M.~De~Lara, T.~Martin, and T.~Rigaut.
\newblock Time block decomposition of multistage stochastic optimization
  problems.
\newblock {\em J. Convex Anal.}, 30(2):627--658, 2023.

\bibitem{dowson2021sddp}
O.~Dowson and L.~Kapelevich.
\newblock {SDDP.jl}: a {Julia} package for stochastic dual dynamic programming.
\newblock {\em INFORMS Journal on Computing}, 33(1):27--33, 2021.

\bibitem{gpw23}
M.~Germain, H.~Pham, and X.~Warin.
\newblock A level-set approach to the control of state-constrained
  {M}c{K}ean-{V}lasov equations: application to renewable energy storage and
  portfolio selection.
\newblock {\em Numer. Algebra Control Optim.}, 13(3-4):555--582, 2023.

\bibitem{im96}
G.~Infanger and D.~P. Morton.
\newblock Cut sharing for multistage stochastic linear programs with interstage
  dependency.
\newblock {\em Math. Programming}, 75(2):241--256, 1996.
\newblock Approximation and computation in stochastic programming.

\bibitem{kar75}
A.~F. Karr.
\newblock Weak convergence of a sequence of {M}arkov chains.
\newblock {\em Z. Wahrscheinlichkeitstheorie und Verw. Gebiete}, 33(1):41--48,
  1975/76.

\bibitem{lw21}
M.~L\"{o}hndorf and D.~Wozabal.
\newblock Gas storage valuation in incomplete markets.
\newblock {\em European J. Oper. Res.}, 288(1):318--330, 2021.

\bibitem{lohndorf2019modeling}
N.~L{\"o}hndorf and A.~Shapiro.
\newblock Modeling time-dependent randomness in stochastic dual dynamic
  programming.
\newblock {\em European Journal of Operational Research}, 273(2):650--661,
  2019.

\bibitem{lohndorf2013optimizing}
N.~L{\"o}hndorf, D.~Wozabal, and S.~Minner.
\newblock Optimizing trading decisions for hydro storage systems using
  approximate dual dynamic programming.
\newblock {\em Operations Research}, 61(4):810--823, 2013.

\bibitem{mp12}
P.~Malo and T.~Pennanen.
\newblock Reduced form modeling of limit order markets.
\newblock {\em Quantitative Finance}, 12(7):1025--1036, 2012.

\bibitem{megel2015stochastic}
O.~M{\'e}gel, J.~L Mathieu, and G.~Andersson.
\newblock Stochastic dual dynamic programming to schedule energy storage units
  providing multiple services.
\newblock In {\em 2015 IEEE Eindhoven PowerTech}, pages 1--6. IEEE, 2015.

\bibitem{pdccjr}
C.~Martinez Parra, M.~De~Lara, J.-P. Chancelier, P.~Carpentier, J.-M. Janin,
  and M.~Ruiz.
\newblock A two-timescale decision-hazard-decision formulation for storage
  usage values calculation.
\newblock {\em arXiv preprint arXiv:2408.17113}, 2024.

\bibitem{pen11}
T.~Pennanen.
\newblock Arbitrage and deflators in illiquid markets.
\newblock {\em Finance Stoch}, 15(1):57--83, 2011.

\bibitem{pen14}
T.~Pennanen.
\newblock Optimal investment and contingent claim valuation in illiquid
  markets.
\newblock {\em Finance Stoch}, 18(4):733--754, 2014.

\bibitem{pp24}
T.~Pennanen and A.-P. Perkki{\"o}.
\newblock {\em Convex Stochastic Optimization}.
\newblock Springer, 2024.

\bibitem{pp25a}
T.~Pennanen and A.-P. Perkki{\"o}.
\newblock Dynamic programming and dimensionality in convex stochastic control.
\newblock {\em Optimization Letters}, 2025.

\bibitem{Pereira-Pinto:1991}
M.~V.~F. Pereira and L.~M. V.~G. Pinto.
\newblock Multi-stage stochastic optimization applied to energy planning.
\newblock {\em Math. Program.}, 52:359--375, October 1991.

\bibitem{pm12}
A.B. Philpott and V.L. {de Matos}.
\newblock Dynamic sampling algorithms for multi-stage stochastic programs with
  risk aversion.
\newblock {\em European Journal of Operational Research}, 218(2):470--483,
  2012.

\bibitem{pv21}
A.~Picarelli and T.~Vargiolu.
\newblock Optimal management of pumped hydroelectric production with state
  constrained optimal control.
\newblock {\em J. Econom. Dynam. Control}, 126:Paper No. 103940, 21, 2021.

\bibitem{ptw9}
A.~Porchet, N.~Touzi, and X.~Warin.
\newblock Valuation of power plants by utility indifference and numerical
  computation.
\newblock {\em Math. Methods Oper. Res.}, 70(1):47--75, 2009.

\bibitem{roald2023power}
L.~A. Roald, D.~Pozo, A.~Papavasiliou, D.~K. Molzahn, J.~Kazempour, and
  A.~Conejo.
\newblock Power systems optimization under uncertainty: A review of methods and
  applications.
\newblock {\em Electric Power Systems Research}, 214:108725, 2023.

\bibitem{roc70a}
R.~T. Rockafellar.
\newblock {\em Convex analysis}.
\newblock Princeton Mathematical Series, No. 28. Princeton University Press,
  Princeton, N.J., 1970.

\bibitem{rw98}
R.~T. Rockafellar and R.~J.-B. Wets.
\newblock {\em Variational analysis}, volume 317 of {\em Grundlehren der
  Mathematischen Wissenschaften [Fundamental Principles of Mathematical
  Sciences]}.
\newblock Springer-Verlag, Berlin, 1998.

\bibitem{rte_imbalance_price}
RTE.
\newblock Imbalance settlement price, 2024.
\newblock
  \url{https://www.services-rte.com/en/view-data-published-by-rte/balancing.html}.

\bibitem{tauchen1991quadrature}
G.~Tauchen and R.~Hussey.
\newblock Quadrature-based methods for obtaining approximate solutions to
  nonlinear asset pricing models.
\newblock {\em Econometrica: Journal of the Econometric Society}, pages
  371--396, 1991.

\bibitem{weitzel2018energy}
T.~Weitzel and Ch.~H. Glock.
\newblock Energy management for stationary electric energy storage systems: A
  systematic literature review.
\newblock {\em European Journal of Operational Research}, 264(2):582--606,
  2018.

\bibitem{yu2017stochastic}
N.~Yu and B.~Foggo.
\newblock Stochastic valuation of energy storage in wholesale power markets.
\newblock {\em Energy Economics}, 64:177--185, 2017.

\bibitem{zheng2022arbitraging}
N.~Zheng, J.~Jaworski, and B.~Xu.
\newblock Arbitraging variable efficiency energy storage using analytical
  stochastic dynamic programming.
\newblock {\em IEEE Transactions on Power Systems}, 37(6):4785--4795, 2022.

\end{thebibliography}
\end{document}